\newcommand{\E}{\bm{\mathrm E}}
\newcommand{\Prob}{\bm{\mathrm{P}}}
\newcommand{\e}{\varepsilon}
\newcommand{\lp}{\left(}
\newcommand{\rp}{\right)}
\newcommand{\la}{\left|}
\newcommand{\ra}{\right|}
\newcommand{\lb}{\left[}
\newcommand{\rb}{\right]}
\def\DS{\displaystyle}
\newtheorem{theorem}{Theorem}[section]
\newtheorem{corollary}[theorem]{Corollary}
\newtheorem{lemma}[theorem]{Lemma}
\newtheorem{proposition}[theorem]{Proposition}
\numberwithin{equation}{section}
\title{Energy growth for systems of coupled oscillators with partial damping}
\author{Dmitry Dolgopyat\footnote{Dept of Mathematics, University of Maryland, Email: dolgop@umd.edu}, Bassam Fayad\footnote{Dept of Mathematics, University of Maryland, Email: bassam@umd.edu}, Leonid Koralov\footnote{Dept of Mathematics, University of Maryland, Email: koralov@umd.edu}, Shuo Yan\footnote{Dept of Mathematics, Imperial College London, Email: s.yan@imperial.ac.uk}}
\date{}
\begin{document}
\maketitle
\begin{abstract}
{We consider two interacting particles on the circle. The particles are subject to stochastic forcing, which is modeled by white noise. In addition, one of the particles is subject to friction, which models energy dissipation due to the interaction with the environment. We show that, in the diffusive limit, the {absolute value of the velocity of the other} particle converges to the reflected Brownian motion. In other words, the interaction between the particles is asymptotically negligible in the scaling limit. The proof combines averaging for large energies with large deviation estimates for small energies.}\\
    \indent \textbf{Keywords:} Hamiltonian, energy transfer, stochastic forcing\\
    \indent \textbf{Mathematics Subject Classification:} 60H10, 34C15, 82C05
\end{abstract}
\section{Introduction}
Understanding energy transfer in complex systems is a fundamental problem in mathematical physics.
There is vast literature on this subject and, despite rigorous results
in a number of important models, there are still many challenges in our understanding of this phenomenon.
Energy transfer plays a key role in several important phenomena including the following:

(a) {\em Fermi acceleration.} The problem is to describe the motion of a particle in random media where
the particle accelerates due to random energy exchange with the environment. Originally introduced by
Fermi \cite{F49} in order to explain the presence of highly energetic particles in cosmic rays, this model
is a subject of intense research (see, e.g., \cite{LLC80, Do08} and references therein).

(b) {\em Fourier Law.} Here the problem is to describe how the heat emitted from some source(s) spreads 
around a given  domain. In particular, one would like to understand the heat conductivity of different materials.
We refer the readers to \cite{BLRB, Dh08, LLP03, Shp} for a review of this subject.
The Fourier law remains an active area of research, see e.g. \cite{KLO} and  references therein.

(c) {\em Energy cascade in turbulence.} In mathematical terms, the problem is to understand how the energy introduced 
at large scales is transferred to the smaller scales in Hamiltonian PDEs. This problem has certain similarities to
the previous problems as it can be reduced to a system of interacting ODEs after passing to the Fourier basis.
We refer the readers to \cite{CKSTT, EGK16, PT-Quant} for more information on this subject. 

The difficulty of the transfer problems comes from the complex interaction network of multi-particle 
systems (cf. \cite{Lan}). A simpler situation appears in the rarified regime where each particle interacts during long time
intervals only with a fixed small collection of other particles. In an effort to understand such systems, several authors
considered local equilibria for a small number of particles subjected to forcing and dissipation (see, e.g.,
\cite{CEHRB, HM09, HM19} and references therein).
We note that even in the case of systems without forcing, the rate of convergence to equilibrium is not completely understood,
see \cite{DLS24} and references therein for a detailed discussion.

In the present paper, we also deal with a simple system of this form. Namely, we consider two one-degree-of-freedom particles
interacting via a bounded potential. 
In addition, we suppose that each particle gains energy via stochastic forcing that is modeled by white noise. 
The energy dissipation is modeled by friction.  
We suppose that only one of the particles is subject to dissipation. 
The question is whether the energy exchange between the particles is strong enough to  transfer the excess energy to the particle with the friction and eventually remove it from the system so as to ensure that the total energy of the system does not grow on average. 
This turns out not to be the case for the system we consider. 
The mechanism is the following. 
One particle loses energy quickly due to friction, so most of the time it has much smaller energy than the other particle. 
When the other particle has high energy, the relative change in energy is small and so it can be studied using perturbative methods. 
If we neglect the interaction as well as the forcing, then the motion of the second particle is integrable, and so we can average the interaction along the orbit of the unperturbed system. 
The averaged interaction vanishes due to the Hamiltonian nature of the two-particle system. This ensures that the energy exchange is too slow at large energies to keep the total energy of the system finite.

Let us now describe our model more precisely.
Let $V$ be a twice continuously differentiable function on $\mathbb T^2$ and $H$ be defined as
\[
H(r_1,r_2,\theta_1,\theta_2) = \frac{1}{2}(r_1^2+r_2^2)+V(\theta_1,\theta_2), \text{ for }(r_1,r_2,\theta_1,\theta_2)\in \mathbb R^2\times\mathbb T^2.
\]
Consider the two-particle Hamiltonian system in $\mathbb R^2\times\mathbb T^2$, subject to stochastic forcing and energy dissipation:
\begin{equation}
\label{eq:the_system}
    \begin{cases}
        d r_1(t)=-V_1'(\theta_1(t),\theta_2(t))dt+d W_1(t)\\
        d r_2(t)=-V_2'(\theta_1(t),\theta_2(t))dt+dW_2(t)-r_2(t)dt\\
        d\theta_1(t)=r_1(t)dt\\
        d\theta_2(t)=r_2(t)dt
    \end{cases},
\end{equation}where $W_1(t)$ and $W_2(t)$ are two independent Brownian motions, and $V'_1$ and $V'_2$ are the partial derivatives of $V$ with respect to $\theta_1$ and $\theta_2$, respectively. Without the stochastic terms ($dW_1(t)$ and $dW_2(t)$) and the dissipation ($-r_2(t)dt$), the system would preserve the Hamiltonian, and, consequently, $r_1(t)$ and $r_2(t)$ would remain bounded. With the stochastic forcing, the expectation of $|r_1(t)|$ is unbounded, despite the dissipation. Moreover, we have the following convergence result.
\begin{theorem}
\label{thm:main_result}
    For each initial distribution $\mu$ of the processes in \eqref{eq:the_system},
    the process ${|r_1(t\cdot T)|}/{\sqrt{T}}$ converges weakly, as $T\to\infty$, to a Brownian motion starting and reflected at the origin.
\end{theorem}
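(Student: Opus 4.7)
By Brownian scaling $W_1(tT)/\sqrt T \stackrel{d}{=} \widetilde W_1(t)$, so integrating the first equation of \eqref{eq:the_system} gives
\[
\frac{r_1(tT)}{\sqrt T} = \frac{r_1(0)}{\sqrt T} + \widetilde W_1(t) - \frac{1}{\sqrt T}\int_0^{tT} V'(\theta_1(s)-\theta_2(s))\,ds.
\]
Hence the theorem will follow if the last term converges to $0$ in probability uniformly on compact intervals in $t$; then $r_1(tT)/\sqrt T$ converges weakly to $\widetilde W_1$, and the continuous mapping theorem applied to $x\mapsto|x|$, together with the fact that $|B_t|$ is a Brownian motion reflected at $0$ when $B_0=0$, yields the claim.

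The mechanism responsible for the vanishing of the remaining integral is averaging at large energies. Set $u=r_1-r_2$ and $\phi=\theta_1-\theta_2$, so that $d\phi=u\,ds$ and the relative phase oscillates rapidly whenever $|u|$ is large. Let $F$ denote the zero-mean antiderivative of $V'$ on $\mathbb S^1$; then the cohomological equation $u\,\partial_\phi g = V'$ is solved by $g(\phi,u) = F(\phi)/u$. An application of It\^o's formula gives, schematically,
\[
V'(\phi(s))\,ds = dg(\phi(s),u(s)) + O(u^{-2})\,ds + O(u^{-2})(dW_1-dW_2),
\]
so, on a time interval where $|u|$ is bounded below by some $R>0$, the integral of $V'$ equals a boundary term of order $|u|^{-1}$ plus a Riemann integral of order $\int u^{-2}\,ds$ plus a martingale with quadratic variation $\int u^{-4}\,ds$.

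One then partitions $[0,tT]$ according to whether $|u|$ is above or below a cutoff $R=R(T)$. The large-$|u|$ contribution is controlled by the identity above, after a smooth truncation of $g$ near $u=0$ to avoid the singularity; the small-$|u|$ contribution is bounded crudely by $\|V'\|_\infty$ times the occupation time of $\{|u|\le R\}$. Tightness of $r_2$, which follows from a Lyapunov estimate since its equation is an Ornstein--Uhlenbeck process perturbed by the bounded term $V'$, lets one identify the small-$|u|$ set with a neighbourhood of the diagonal $r_1=r_2$; occupation-time estimates for the near-Brownian process $r_1$ then give that this time is of order $R\sqrt T$.

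The central difficulty is to choose $R$ so that \emph{all} contributions are $o(\sqrt T)$: the averaging residuals request $R$ large while the occupation time near the resonance $u=0$ requests $R$ small, and a single application of the cohomological equation leaves a gap. I would therefore iterate the averaging, exploiting the additional cancellations $\int F\,d\phi=0$ and $\int FV'\,d\phi=0$ to remove the $O(u^{-2})$ drift at the next order and produce an $O(u^{-3})$ residual, and complement it with the large deviation estimates for the small-energy regime alluded to in the abstract to quantify how rarely the process lingers near the diagonal. This delicate balance between averaging at large energies and large deviations near the resonance is where the main work of the proof lies.
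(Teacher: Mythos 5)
Your large-energy averaging via the cohomological equation $u\,\partial_\phi g = V'$ in the relative coordinates $(\phi,u)=(\theta_1-\theta_2,\,r_1-r_2)$ is a plausible alternative to the paper's route (Taylor expansion of $V'$ along full rotations in Lemma~\ref{lem:prelim_expansion}, then invariant-measure averaging for the decoupled $(r_2,\theta_2)$ process via Harris's theorem in Lemmas~\ref{lem:homogeneous_llt}--\ref{lem:llt}). But there are two structural gaps.

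First, you reduce the theorem to $Z(tT)/\sqrt{T}\to0$ uniformly on compacts, i.e.\ to $r_1(tT)/\sqrt{T}\Rightarrow B_t$. That is strictly stronger than the theorem, and the paper deliberately does not prove it. Instead it shows that $|X_t^T|$ asymptotically solves the martingale problem for the reflected generator on the domain $\{f: f'_+(0)=0\}$. The boundary condition $f'_+(0)=0$ is doing real work: in Lemma~\ref{lem:martingale_problem} each excursion from $\e$ to $2\e$ then contributes only $|f(2\e)-f(\e)|=O(\e^2)$, and combined with the $O(1/\e)$ bound on the number of excursions (Corollary~\ref{cor:number_excursions}) the near-origin term vanishes. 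Your approach, by contrast, needs the near-origin piece of $Z$ itself to be $o(\sqrt T)$, and nothing in the sketch establishes this. When $|r_1|$ is small, $r_2$ is also typically small, so $\phi$ barely moves, $V'(\phi)$ does not decorrelate within an excursion, and the crude occupation-time bound leaves an $O(R)$ contribution to $Z(tT)/\sqrt{T}$ that you have no mechanism to kill.

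Second, the balance you correctly identify is not resolvable by iterated averaging alone. After a smooth truncation $g_R(\phi,u)=F(\phi)\psi_R(u)/u$ the boundary term is $O(1/R)$ (harmless once divided by $\sqrt T$), but the drift from $\partial_u g_R$ contributes $O(\sqrt T/R)$ over the BM-like occupation measure, demanding $R\to\infty$, while the untreated piece $\int V'(1-\psi_R(u))\,ds$ contributes $O(R\sqrt T)$, demanding $R\to0$. Higher-order correctors only sharpen the residuals on $|u|>R$ from $O(u^{-2})$ to $O(u^{-3})$ and beyond; they blow up even faster near $u=0$ and give nothing on $|u|<R$, so the tension is untouched. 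The paper escapes precisely by never asking the near-origin drift to vanish: it controls the length and number of excursions (Proposition~\ref{prop:time_near_zero}, Corollary~\ref{cor:number_excursions}), separately proves tightness (Lemma~\ref{lem:tight}), and appeals to uniqueness of the reflected martingale problem. These components, and the control of $r_2$ supplied by Proposition~\ref{prop:sup_of_r_2} and Lemma~\ref{lem:moments}, are absent from the sketch, so as written the proposal leaves the central small-energy step open.
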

In a subsequent work, we will  explore multiple coupled oscillators (with the coupling that is not necessarily periodic or bounded), where stochastic forcing and/or energy dissipation may affect some or all of the particles. Different phenomena can be observed in some of the examples, including convergence to a limiting process, as in the current paper, boundedness of the expectation of the energy (if the dissipation prevails), or blow-up of the energy in finite time, all depending on the relative strength of the coupling, forcing, and dissipation. 
We note that recent papers \cite{BL24, BBCL} give sufficient conditions for positive recurrence 
using Lyapunov function techniques. By contrast the case where there is no finite stationary measure is much less studied. 
In the current paper we provide a rigorous treatment of one of the simplest non-trivial cases, so that the present analysis could serve as a blueprint of the method
for future works.
\\

\textbf{Plan of the paper.}
In Section \ref{sec:expansions}, we obtain an accurate expansion of $r_1(t+\sigma)-r_1(t)$, where $\sigma = 1/|r_1(t)|$, assuming $r_1(t)$ is large. Due to the Hamiltonian nature of the  system without noise and friction, the main terms in $r_1(t+\sigma)-r_1(t)$ are given by the variation $W_1(t+\sigma)-W_1(t)$ and an averaged interaction term $-\frac{r_2(t)}{r_1(t)|r_1(t)|}U_2'(\theta_1(t),\theta_2(t))$, where $U$ is a twice continuously differentiable function on $\mathbb T^2$ defined in Lemma~\ref{lem:prelim_expansion}.

In Section~\ref{sec:r2theta2}, we obtain estimates on the supremum and the moments of $r_2(t)$, in Proposition~\ref{prop:sup_of_r_2} and Lemma~\ref{lem:moments}, respectively. 
Proposition~\ref{prop:sup_of_r_2} shows that $r_2(t)$ typically stays relatively small, while
Lemma~\ref{lem:moments} helps us to control the terms with the powers of $r_2$ on the right-hand side of \eqref{eq:expansion} and prove further averaging for the interaction term.

In Section~\ref{sec:r1}, we show that the re-scaled process $r_1(t\cdot T)/\sqrt{T}$ behaves as a Brownian motion until it approaches the origin. This is summarized in Proposition~\ref{prop:outer_martingale_generalized}. 
In Subsection~\ref{SSShortTime}, short time behavior of the process $r_1(t)$ is described in Proposition \ref{prop:expansion-t(R)}. Namely,
$r_1(t)$ behaves as a Brownian motion plus a drift term that is small, provided that $r_1(t)$ is large.
In Subsection~\ref{SSModeratetime}, moderate time behavior of the process is described in Lemma~\ref{lem:choose_beta}, which is proved by using Proposition \ref{prop:expansion-t(R)} and its corollary iteratively.
The proof of Proposition~\ref{prop:outer_martingale_generalized} is given in Subsection~\ref{SSLongtime}.

The times when the re-scaled process $r_1(t\cdot T)/\sqrt{T}$ is near the origin need to be analyzed separately. 
In Section~\ref{sec:exit_time_from_the_origin}, we show that the time spent by the process near the origin is not large, which will allow us to ignore it in the proof of the main result.
The control near the origin is summarized in Proposition~\ref{prop:time_near_zero} and Proposition~\ref{prop:number_excursions}, where we show that the exit time from a neighborhood of zero can be bounded from above, and where we control the number of excursions between two values near the origin.

In Section~\ref{sec:tight}, we prove the tightness of the processes $|r_1(t\cdot T)|/\sqrt{T}$, $T\geq1$, with initial distribution $\mu$ of $(r_1,r_2,\theta_1,\theta_2)$.

Finally, Theorem~\ref{thm:main_result} is proved in Section~\ref{sec:weak_convergence} by putting together Propositions~\ref{prop:sup_of_r_2}, \ref{prop:outer_martingale_generalized}, \ref{prop:time_near_zero}, \ref{prop:number_excursions}, and \ref{prop:tight}, and using standard probability techniques.

\section{Expansions}
\label{sec:expansions}
In this section, we obtain preliminary results on the typical behavior of the processes on short time intervals. In particular, given $r_1(t)$, we give an accurate expansion of $r_1(t+\sigma)$, where $\sigma = 1/|r_1(t)|$.
Let us introduce some notation first.
Let $\mathcal F_t$ be the natural filtration generated by the Brownian motions $W_1(t)$ and $W_2(t)$, and $\tilde{\mathcal F}^T_t$ be the natural filtration generated by the Brownian motions $W_1(t\cdot T)$ and $W_2(t\cdot T)$.
For a random variable $A$ and a function $B$ defined on the parameter space of $A$, we write $A=O(B)$ (or $A\lesssim B$) if there is a constant $M>0$ such that $|A|\leq M |B|$, $A=\Theta(B)$ if there is a constant $M>0$ such that $\|A\|_2\leq M |B|$, and $A=\tilde\Theta(B)$ if there is a constant $M>0$ such that $\|A\|_4\leq M |B|$.
The absolute value on $\mathbb S^1$ is defined to be the shortest distance to $0$.
These are often needed to describe the asymptotic behavior of $A$ (e.g., when the time parameter tends to zero).
We start our analysis by solving explicitly for $r_1(t)$ and $r_2(t)$:
\begin{align}
     r_1(t)&=r_1(0)+W_1(t)-\int_0^t V_1'(\theta_1(s),\theta_2(s))ds=r_1(0)+\tilde\Theta(\sqrt{t})+O(t),\label{eq:solve_r1}\\
     r_2(t)&=e^{-t}r_2(0)-e^{-t}\int_0^t e^{s}V_2'(\theta_1(s),\theta_2(s))ds+e^{-t}\int_0^t e^{s}d{W_2}(s)\label{eq:solve_r2}\\
    &=e^{-t}r_2(0)+O(1-e^{-t})+\tilde\Theta(\sqrt{1-e^{-2t}}).\nonumber
\end{align}
For the other processes, we can write the following expansions:
\begin{equation}
\label{eq2:expansion-theta1}
\begin{aligned}
    \theta_1(t)&=\theta_1(0)+\int_0^t[r_1(0)+\tilde\Theta(\sqrt{s})+O(s)]ds=\theta_1(0)+r_1(0)t+O(t^2)+\tilde\Theta(t^{3/2}),\\
    \theta_2(t)&=\theta_2(0)+\int_0^t\left[e^{-s}r_2(0)+O(1-e^{-s})+\tilde\Theta(\sqrt{1-e^{-2s}})\right]ds\\
    &=\theta_2(0)+r_2(0)(1-e^{-t})+O(t^2)+\tilde\Theta(t^{3/2}).
\end{aligned} 
\end{equation}

As our main result, Theorem~\ref{thm:main_result}, indicates, we aim to show that, away from the origin, the interaction term $V(\theta_1(t),\theta_2(t))$ does not essentially change the behavior of $r_1(t)$ on large time intervals.
Namely, if we let $Z(t)=-\int_0^t V_1'(\theta_1(s),\theta_2(s))ds$, then
\begin{equation}
\label{def:Z}
        r_1(t)=r_1(0)+W_1(t)+Z(t),
\end{equation}
and $Z(t)$ is expected to be small compared with $W_1(t)$ on large time scales.
We start with the behavior of $Z(t)$ after time $\sigma=1/|r_1(0)|$.
Eventually, we'll be interested in asymptotically small values of $\sigma$, but the following result holds for all $\sigma \leq 1$.
\begin{lemma}
\label{lem:prelim_expansion}
If $|r_1(0)|\geq 1$ and $\sigma=1/|r_1(0)|$, then
\begin{align}
    Z(\sigma)&=r_2(0)O(\sigma^2)+O(\sigma^3)+\tilde\Theta(\sigma^{5/2}),\label{eq:intermediate_expansion}\\
    Z(\sigma)&=-\frac{r_2(0)}{r_1(0)|r_1(0)|}U_2'(\theta_1(0),\theta_2(0))+(r_2(0)^2+1)O({\sigma}^3)+\Theta({\sigma}^{5/2}),\label{eq:expansion}
\end{align}where $U(\theta_1,\theta_2)=V(\theta_1,\theta_2)-\int_{\mathbb S^1}V(\theta,\theta_2)d\theta$.
\end{lemma}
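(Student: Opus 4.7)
The plan is to approximate $\phi(s) := \theta_1(s) - \theta_2(s)$ by the unperturbed linear trajectory $\phi_0 + r_1(0) s$, where $\phi_0 := \theta_1(0) - \theta_2(0)$, and to exploit the periodicity of $V$ on $\mathbb S^1 \simeq \mathbb R/\mathbb Z$. From \eqref{eq2:expansion-theta1},
\[
\phi(s) = \phi_0 + r_1(0) s + q(s), \qquad q(s) := -r_2(0)(1-e^{-s}) + O(s^2) + \tilde\Theta(s^{3/2}).
\]
The key geometric fact is that $|r_1(0)| \sigma = 1$ is exactly one period of $V$, so by periodicity
\[
\int_0^\sigma V'(\phi_0 + r_1(0) s)\,ds = \frac{V(\phi_0 + r_1(0) \sigma) - V(\phi_0)}{r_1(0)} = 0,
\]
which is the source of cancellation that makes $Z(\sigma)$ small.

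For \eqref{eq:intermediate_expansion}, the mean value theorem gives $V'(\phi(s)) = V'(\phi_0 + r_1(0) s) + V''(\xi_s) q(s)$ for some $\xi_s$ between the two arguments, and the vanishing above reduces $-Z(\sigma)$ to $\int_0^\sigma V''(\xi_s) q(s)\,ds$. Bounding by $\|V''\|_\infty \int_0^\sigma |q(s)|\,ds$ and handling the three contributions to $q$ separately gives $|r_2(0)| \int_0^\sigma (1-e^{-s})\,ds = r_2(0) \cdot O(\sigma^2)$, $\int_0^\sigma O(s^2)\,ds = O(\sigma^3)$, and $\int_0^\sigma \tilde\Theta(s^{3/2})\,ds = \tilde\Theta(\sigma^{5/2})$ by Minkowski's inequality on the $L^4$ norm. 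This yields \eqref{eq:intermediate_expansion}.

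For \eqref{eq:expansion}, I would push the expansion one order further:
\[
V'(\phi(s)) = V'(\phi_0 + r_1(0) s) + V''(\phi_0 + r_1(0) s)\, q(s) + E_s,
\]
with remainder $|E_s| \lesssim q(s)^2$ from a Lipschitz-type control on $V''$. Writing $q(s) = -r_2(0) s + \tilde q(s)$ with $\tilde q(s) = r_2(0) O(s^2) + O(s^2) + \tilde\Theta(s^{3/2})$, and applying integration by parts together with the periodicity cancellation,
\[
\int_0^\sigma s\, V''(\phi_0 + r_1(0) s)\,ds = \frac{\sigma V'(\phi_0)}{r_1(0)} - \frac{1}{r_1(0)} \int_0^\sigma V'(\phi_0 + r_1(0) s)\,ds = \frac{\sigma V'(\phi_0)}{r_1(0)},
\]
which produces the claimed main term $\frac{r_2(0)}{r_1(0)^2} V'(\phi_0)$ of $Z(\sigma)$. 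The $\tilde q$-contribution is $r_2(0) O(\sigma^3) + O(\sigma^3) + \tilde\Theta(\sigma^{5/2})$ after multiplication by $V''$ and integration, and the quadratic remainder satisfies
\[
\int_0^\sigma |E_s|\,ds \lesssim \int_0^\sigma q(s)^2\,ds = r_2(0)^2 O(\sigma^3) + O(\sigma^3) + \Theta(\sigma^4),
\]
with the $r_2(0)^2 \sigma^3$ term coming from $\int_0^\sigma (r_2(0) s)^2\,ds$ and the stochastic $\Theta(\sigma^4)$ from $\|\tilde\Theta(s^{3/2})^2\|_2 = \|\tilde\Theta(s^{3/2})\|_4^2$. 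Since $\sigma^4 \leq \sigma^{5/2}$ for $\sigma \leq 1$ and $|r_2(0)| \leq r_2(0)^2 + 1$, combining these gives the $(r_2(0)^2 + 1) O(\sigma^3) + \Theta(\sigma^{5/2})$ error.

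The principal obstacle is the quadratic bound $|E_s| \lesssim q(s)^2$: a naive mean-value estimate only yields $|V''(\xi_s) - V''(\phi_0 + r_1(0) s)| \leq 2\|V''\|_\infty$, which integrates to an error of order $r_2(0) \sigma^2$, the same size as the leading term and hence insufficient. Quantitative quadratic control of $E_s$ requires a Lipschitz (i.e., $C^3$-type) hypothesis on $V$; under just the stated $C^2$ assumption one instead uses the uniform continuity modulus of $V''$ on the compact circle $\mathbb S^1$ together with the decomposition $q = -r_2(0) s + \tilde q$ to check that the refined bound is still within the claimed size. The stochastic errors propagate throughout by Minkowski's inequality for Bochner integrals, converting pointwise $L^4$ (respectively $L^2$) bounds on the integrands into $L^4$ ($L^2$) bounds on their integrals and squares.
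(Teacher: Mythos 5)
Your proof takes essentially the same route as the paper's: expand $\theta_1(s)-\theta_2(s)$ about the unperturbed linear trajectory $\phi_0 + r_1(0)s$, invoke the period-one cancellation $\int_0^\sigma V'(\phi_0 + r_1(0)s)\,ds = 0$ coming from $|r_1(0)|\sigma = 1$, and Taylor-expand $V'$ in the perturbation $q(s)$ to first and second order respectively, with integration by parts extracting the leading term $\frac{r_2(0)}{r_1(0)^2}V'(\phi_0)$. You merely spell out the periodicity cancellation and the IBP that the paper applies silently, and your observation that the quadratic remainder bound $|E_s|\lesssim q(s)^2$ formally requires Lipschitz $V''$ rather than mere $C^2$ is a fair critique that applies equally to the paper's own proof (the authors themselves acknowledge treating regularity loosely, and the slightly weaker error one would obtain for $C^2$ potentials is harmless in the downstream applications where $r_2$ is bounded).
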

It is worth noting that, compared to \eqref{eq:intermediate_expansion}, \eqref{eq:expansion} provides an exact form of the leading term. However, one of the error terms in \eqref{eq:expansion} depends on $r_2(0)$.
\begin{proof}
The proof utilizes the expansions we obtained above and Taylor's expansion of $V_1'(\theta_1(s),\theta_2(s))$ at $(\theta_1(0)+r_1(0)s,\theta_2(0))$ for $s\in[0,\sigma]$.
Below, {to simplify our notation, we denote $\bm r_i=r_i(0)$ and $\bm\theta_i=\theta_i(0)$ {for $i=1,2$}. By \eqref{eq2:expansion-theta1},}
\begin{align}
    Z(\sigma)&=-\int_0^{\sigma}V_1'(\theta_1(s),\theta_2(s))ds\nonumber\\
    &=-\int_0^{\sigma}V_1'\left(\bm \theta_1+\bm r_1s+O(s^2)+\tilde\Theta(s^{3/2}),\bm \theta_2+\bm r_2(1-e^{-s})+O(s^2)+\tilde\Theta(s^{3/2})\right)ds \label{VPrimeInt} \\
    &=-\int_0^{\sigma}V_1'(\bm \theta_1+\bm r_1s,\bm \theta_2)ds+\int_0^{\sigma}(\bm r_2O(1-e^{-s})+O(s^2)+\tilde\Theta(s^{3/2}))ds\nonumber\\
    &=\bm r_2O(\sigma^2)+O(\sigma^3)+\tilde\Theta(\sigma^{5/2}).\nonumber
\end{align}
{This proves \eqref{eq:intermediate_expansion}. Next, \eqref{VPrimeInt} gives}
\begin{align*}
    Z(\sigma)&=-\int_0^{\sigma}V_1'(\bm \theta_1+\bm r_1s,\bm \theta_2)ds\nonumber\\
    &\quad-\int_0^{\sigma}V_{12}''(\bm \theta_1+\bm r_1s,\bm \theta_2)[\bm r_2s+\bm r_2O(s^2)+O(s^2)+\tilde\Theta(s^{3/2})]ds\\
    &\quad+\int_0^\sigma V_{11}''(\bm \theta_1+\bm r_1s,\bm \theta_2)(O(s^2)+\tilde\Theta(s^{3/2}))ds+\bm r_2^2O(\sigma^3)+O(\sigma^5)+\Theta(\sigma^4)\nonumber\\
    &=-\int_0^{\sigma}V_{12}''(\bm \theta_1+\bm r_1s,\bm \theta_2)\bm r_2sds+\bm r_2O(\sigma^3)+O(\sigma^3)+\tilde\Theta(s^{5/2})+\bm r_2^2O(\sigma^3)+\Theta(\sigma^4)\nonumber\\
    &=-\frac{\bm r_2}{\bm r_1}\int_0^\sigma \frac{d}{ds}V_2'(\bm\theta_1+\bm r_1s,\bm\theta_2)\cdot sds+(\bm r_2^2+1)O({\sigma}^3)+\Theta({\sigma}^{5/2})\\
    &=-\frac{\bm r_2}{\bm r_1}\lb V_2'(\bm\theta_1+\bm r_1s,\bm\theta_2)s\bigg|^\sigma_0-\int_0^\sigma V_2'(\bm\theta_1+\bm r_1s,\bm\theta_2)ds\rb+(\bm r_2^2+1)O({\sigma}^3)+\Theta({\sigma}^{5/2})\\
    &=-\frac{\bm r_2}{\bm r_1|\bm r_1|}U_2'(\bm \theta_1,\bm \theta_2)+(\bm r_2^2+1)O({\sigma}^3)+\Theta({\sigma}^{5/2}).\qedhere\end{align*}
\end{proof}

Let us briefly examine the right-hand side of (\ref{eq:expansion}) in order to motivate our further analysis. This formula will be used iteratively to express the increment of $r_1$ on longer time intervals. On short time scales (of order $\sigma = 1/|r_1|$, as above), the difference between the increment of $r_1$ and the Brownian motion is small (of order $1/r^2_1(0)$), provided that $r_2$ is bounded and $r_1$ is large.
However, at larger times, the
cumulative effect of the terms containing $U_2'(\theta_1,\theta_2)$ needs to be controlled. Namely, we'll use an averaging effect to show that the sum of $N$ such terms (each corresponding to a new starting point obtained after a new rotation) is much smaller than $N/r_1^2$, provided that $N$ is large (but not too large so that $r_1$ does not change much on the corresponding time interval).

In the rest of the paper, we make statements with assumptions on the initial condition of $r_1$ and $r_2$. Unless specified otherwise, the results hold uniformly in all the initial conditions of $\theta_1$ and $\theta_2$. In addition, for simplicity, we assume that the function $V$ is bounded by $1$ along with its first and second derivatives. From the proofs, it will be easy to see that the results hold for all twice continuously differentiable $V$.

\section{{Bounds on \texorpdfstring{$r_2$}{r2}}}
\label{sec:r2theta2}
We start this section by giving a result about the supremum of $r_2(t)$ during a large interval of time. It demonstrates that, most of the time, $r_2(t)$ can be expected to be relatively small.
\begin{proposition}
\label{prop:sup_of_r_2}
{Consider a function $D$ such that $D(T)\to\infty$ as $T\to\infty.$ Then for each $t>0$ there is
$T_0(t)$ such that for $T\geq T_0(t)$ we have that for each initial distribution of $r_1(0)$ and $r_2(0)$}
    \begin{equation}
    \label{eq:exponentially_small}
        \Prob(\sup_{0\leq s\leq t\cdot T}|r_2(s)|>|r_2(0)|+D(T))\leq 18tTD(T)e^{-(D(T)-1)^2}
    \end{equation}
\end{proposition}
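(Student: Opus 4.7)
The plan is to use \eqref{eq:solve_r2} to extract a pure Ornstein--Uhlenbeck Gaussian component from $r_2$, reduce the claimed estimate to a supremum-tail bound for this OU process on $[0,tT]$, and then obtain the latter by a discretization and union bound.

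\textbf{Step 1 (Decomposition).} I would write $r_2(t)=e^{-t}r_2(0)+R(t)+N(t)$, where $R(t)=e^{-t}\int_0^t e^{s}V'(\theta_1(s)-\theta_2(s))\,ds$ satisfies $|R(t)|\leq 1-e^{-t}\leq 1$ by the standing assumption $|V'|\leq 1$, and $N(t)=e^{-t}\int_0^t e^{s}dW_2(s)$ is a centered Gaussian Ornstein--Uhlenbeck process depending only on $W_2$, with $\mathrm{Var}(N(t))=(1-e^{-2t})/2\leq 1/2$ and $dN=-N\,dt+dW_2$. Since $|e^{-t}r_2(0)|\leq |r_2(0)|$, the triangle inequality yields $|r_2(s)|\leq |r_2(0)|+1+|N(s)|$ for every $s\geq 0$, so the event in \eqref{eq:exponentially_small} is contained in $\{\sup_{0\leq s\leq tT}|N(s)|>D(T)-1\}$. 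This reduction eliminates the dependence on the initial data, on the potential $V$, and on $W_1$.

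\textbf{Step 2 (Discretization and union bound).} To bound $\Prob(\sup_{s\leq tT}|N(s)|>D(T)-1)$ I would discretize $[0,tT]$ into subintervals of length $\delta=1/D(T)$, producing roughly $K=tTD(T)$ grid points $t_i=i\delta$. At each grid point $N(t_i)$ is centered Gaussian with variance at most $1/2$, so the standard Gaussian tail gives $\Prob(|N(t_i)|>D(T)-1)\leq 2e^{-(D(T)-1)^2}$, and a union bound over grid points produces the factor $tTD(T)\cdot e^{-(D(T)-1)^2}$ appearing in the stated estimate. To pass from the grid maximum to the continuous supremum I would control the within-cell oscillation using the cell-wise identity
\[
N(s)=e^{-(s-t_i)}N(t_i)+e^{-s}\int_{t_i}^s e^{u}dW_2(u),\qquad s\in[t_i,t_{i+1}],
\]
combined with Doob's maximal inequality (or a Gronwall comparison with Brownian motion) applied to the martingale $\int_{t_i}^s e^{u}dW_2(u)$. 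Because the effective quadratic variation over a cell of length $1/D(T)$ is of order $1/D(T)$, the oscillation admits an $e^{-cD(T)}$-type tail whose contribution is exponentially smaller than the main Gaussian term and is absorbed into the prefactor $18$.

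\textbf{Main obstacle.} The delicate step is allocating the tail budget between the grid-exceedance event and the within-cell-oscillation event so that the exponent on the right-hand side remains $(D(T)-1)^2$ rather than some worse value $(D(T)-c)^2$ with $c>1$, which would badly weaken the bound for large $D(T)$. The natural way is to tolerate a within-cell oscillation of size $\eta=\eta(D(T))\to 0$ chosen so that the grid tail at the slightly weakened level $D(T)-1-\eta$ still gives $e^{-(D(T)-1)^2}$ up to a bounded multiplicative constant, while the event of a cell oscillation exceeding $\eta$ (a Brownian-type supremum on an interval of length $1/D(T)$) has tail $e^{-cD(T)^{a}}$ for some $a>0$ and remains negligible after summing over all $K$ cells. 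Verifying the precise numerical constant $18$ then reduces to elementary bookkeeping of the Gaussian tail constant and of the constant in the maximal inequality.
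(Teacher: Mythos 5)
Your Step 1 — peeling off the deterministic decay $e^{-s}r_2(0)$, absorbing the $V'$-drift via $|V'|\leq 1$, and reducing to a pure Ornstein--Uhlenbeck supremum at level $D(T)-1$ — is exactly what the paper does, via the inequality
\[
\Prob\!\left(\sup_{0\leq s\leq tT}|r_2(s)|>|r_2(0)|+D(T)\right)\leq\Prob\!\left(\sup_{0\leq s\leq tT}|r(s)|>D(T)-1\right).
\]
From there the two arguments diverge, and your Step 2 has a genuine gap in the budget allocation.

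You take a grid of mesh $\delta=1/D(T)$ and assert that the within-cell oscillation has an ``$e^{-cD(T)}$-type tail whose contribution is exponentially smaller than the main Gaussian term.'' This is backwards: for large $D(T)$, $e^{-cD(T)}$ is exponentially \emph{larger} than $e^{-(D(T)-1)^2}\approx e^{-D(T)^2}$, so the oscillation contribution would dominate, not be absorbed. Quantitatively, the diffusive oscillation over a cell of length $1/D(T)$ is typically of order $1/\sqrt{D(T)}$, so if $\eta(D(T))\to0$ (as you require for the grid term to keep the exponent $(D(T)-1)^2$ intact), the probability that a single cell oscillates by more than $\eta$ tends to $1$, not $0$. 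Conversely, to make the cell-oscillation tail truly dominated you would need $\eta\gtrsim\sqrt{D(T)}$, which then destroys the exponent in the grid term. The two requirements are incompatible at mesh $1/D(T)$. You can rescue the discretization scheme by taking $\eta\asymp 1/D(T)$ and mesh $\delta\asymp 1/D(T)^4$, but the union bound then produces a prefactor of order $tTD(T)^4$, not $tTD(T)$, so the proposition's bound with the factor $18tTD(T)$ is not recovered; in particular the claim that ``verifying the precise numerical constant $18$... reduces to elementary bookkeeping'' is not supported by this route. (The weaker bound $C\,tTD(T)^4e^{-(D(T)-1)^2}$ would still suffice for the paper's downstream uses where $D(T)=2\sqrt{\log T}$, but it is not what the statement asserts.)

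The paper avoids discretization entirely. It observes that $f(x)=\int_0^x e^{y^2}\,dy$ satisfies $\tfrac12 f''(x)-xf'(x)=0$, i.e.\ $f$ is harmonic for the OU generator, so $f(r(s))$ is a martingale. Doob's maximal inequality applied to the submartingale $f(r(s))\vee 0$ then gives
\[
\Prob\!\left(\sup_{s\leq tT} r(s)>D(T)-1\right)\leq \frac{\E(f(r(tT))\vee 0)}{f(D(T)-1)}\leq\frac{3tT}{f(D(T)-1)},
\]
and the elementary bound $3xf(x)\geq e^{x^2}$ for $x$ large, together with symmetry, produces exactly the factor $18tTD(T)e^{-(D(T)-1)^2}$. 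This exponential-martingale device is what lets one hit the exact exponent and a linear-in-$D(T)$ prefactor in one shot, with no mesh-versus-oscillation tradeoff to negotiate.

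Concretely: replace your Step 2 with the harmonic function $f(x)=\int_0^x e^{y^2}\,dy$ and Doob's inequality, or, if you wish to keep a discretization argument, refine the mesh to $O(1/D(T)^4)$, take $\eta\asymp1/D(T)$, and accept the weaker prefactor $D(T)^4$ in place of $D(T)$.
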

{Note that the right-hand side of \eqref{eq:exponentially_small}
 converges to $0$ as $T\to\infty$ provided that  $D(T)\geq2\sqrt{\log T}$}.
\begin{proof}
    By \eqref{eq:solve_r2}, we know that
    \begin{equation}
    \label{eq:max_ou_process}
        \Prob(\sup_{0\leq s\leq t\cdot T}|r_2(s)|>|r_2(0)|+D(T))\leq\Prob(\sup_{0\leq s\leq t\cdot T}|r(s)|>D(T)-1),
    \end{equation}
    where $r(s)$ is the Ornstein-Uhlenbeck process defined as:
    \begin{equation}
        dr(s)=-r(s)dt+dW(s),~r(0)=0.
    \end{equation}
    Let $f(x)=\int_0^x e^{y^2}dy$. Then it is not hard to see that $f(r(s))$ is a martingale. In order to apply Doob's martingale inequality, we compute an upper bound for $\E(f(r(s))\vee0)$. Notice that $xf(x)<e^{x^2}$, for all $x\geq0$, and $r(s)\sim\mathcal N(0,\frac{1}{2}(1-e^{-2s}))$. Hence, for all $s$ sufficiently large,
    \begin{align*}
        \E(f(r(s))\vee0)&=\E\chi_{\{0<r(s)<1\}}f(r(s))+\E\chi_{\{r(s)\geq 1\}}f(r(s))\\
        \leq e+\E\chi_{\{r(s)\geq 1\}}e^{r(s)^2}/r(s)
        &\leq e+\int_1^\infty \frac{1}{x}\exp(x^2-\frac{x^2}{1-e^{-2s}})dx\\
        = e+\int_1^\infty \frac{1}{x}\exp(-\frac{x^2}{e^{2s}-1})dx
        &= e+\int_0^\infty\exp(-e^{2y}/(e^{2s}-1))dy\\
        \leq e+\int_0^\infty\exp(-e^{2y-2s})dy
        &\leq e+2s+\int_{2s}^\infty \exp(-e^{2y-2s})dy\\
        \leq e+2s+\int_{2s}^\infty \exp(-y^2)dy
        &\leq 3s.
    \end{align*}
So, by Doob's martingale inequality, for all $T$ sufficiently large,
\begin{align*}
    \Prob(\sup_{0\leq s\leq t\cdot T}r(s)>D(T)-1)&=\Prob(\sup_{0\leq s\leq t\cdot T}f(r(s))>f(D(T)-1))\\
    \leq \E(f(r(tT))\vee0)/f(D(T)-1)
    &\leq 3tT/f(D(T)-1).
\end{align*}
Since $r(s)$ is symmetric, the right-hand side of \eqref{eq:max_ou_process} is at most $6tT/f(D(T)-1)$. The inequality in \eqref{eq:exponentially_small} follows from the fact that $3xf(x)\geq\exp(x^2)$ for all $x$ sufficiently large.
\end{proof}

Now let us focus on the situation where $|r_1(0)|=R$ is large and $|r_2(0)| \leq R^{\gamma}$, with $0 <  {\gamma}<1$, is relatively small. 
This situation is indeed representative, since we expect $|r_1(t\cdot T)|/\sqrt{T}$ to behave like a Brownian motion reflected at the origin, hence it is typically of order $\sqrt{T}$ during time of order $T$, while Proposition~\ref{prop:sup_of_r_2} indicates that $|r_2(t)|$ typically stays below $2\sqrt{\log T}$.
We are interested in the behavior of the processes at time $0\leq t(R)\leq R^{{\alpha}}$, where ${\alpha} > 0$ is a constant less than $2/3$. 
During this time, it is unlikely for $r_1(t)$ to get too far away from its original location. 
Namely, let $c(R)=R^{{\beta}}$ where ${\alpha}/2<{\beta}<1/3$. 
We will see that the probability for $r_1(t)$ to exit from $[R-c(R),R+c(R)]$  within time $t(R)$ is small.
In order to describe the evolution of the process $r_1(t)$, we will use the expansions in Section~\ref{sec:expansions} and divide the time $t(R)$ into small intervals that correspond to the full rotations of $\theta_1(t)$.
Let $\tau_c=\inf\{t:|r_1(t)-r_1(0)|=c(R)\}$.
Define 
\begin{equation}
\label{def:stopping_times}
    \begin{aligned}
    &{\sigma}_0=0,\\
    &{\sigma}_{k+1}={\sigma}_k+\chi_{\{\sigma_k  <  t(R) \wedge \tau_c\}}1/|r_1({\sigma}_k)|,~k\geq0,
\end{aligned}
\end{equation}
and $\tilde n=\inf\{k:\sigma_k\geq t(R)\wedge \tau_c\}\leq t(R)(R+2c(R))$.
The large parameter $R$, the related functions $t(R)$, $c(R)$, the stopping times $\tau_c$, $\sigma_k$, $k\geq0$, and the quantity $\tilde n$ are frequently used in the remainder of the paper. Although we will choose different $t(R)$ and $c(R)$ in different results and their proofs, unless otherwise specified, the stopping times and $\tilde n$ are always defined as above.
Since the error terms in \eqref{eq:expansion} involve $r_2(t)$ and $r_2(t)^2$, we need bounds on $r_2(t)$ at those full rotation time steps $\sigma_k$, $0\leq k\leq\tilde n$, which explains the need for  the next lemma. 
\begin{lemma}
\label{lem:moments}
    Let $|r_1(0)|=R$, $0\leq t(R)\leq R^{{\alpha}}$, where ${\alpha}<2/3$, and $c(R)=R^{{\beta}}$, where ${\alpha}/2<{\beta}<1/3$. Then we have, for all $R$ large and each $k\geq 0$,
    \begin{align}
        \E\chi_{\{k\leq\tilde n\}}|r_2(\sigma_k)|^4&\leq (2|r_2(0)|e^{- k/R}+3)^4,
    \end{align}where $\sigma_k$, $k\geq0$, and $\tilde n$ are defined as in \eqref{def:stopping_times}.
\end{lemma}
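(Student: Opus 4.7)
The plan is to prove the bound by induction on $k$, using the strong Markov property at $\sigma_{k-1}$ together with the explicit OU-type solution \eqref{eq:solve_r2} for $r_2$. The base case $k=0$ is immediate from $|r_2(0)|^4\leq(2|r_2(0)|+3)^4$. A preliminary estimate I would use throughout is $e^{-\sigma_k}\leq 2e^{-k/R}$ on $\{k\leq\tilde n\}$: each increment $\Delta_j:=\sigma_j-\sigma_{j-1}=1/|r_1(\sigma_{j-1})|$ lies in $[1/(R+c(R)),\,1/(R-c(R))]$ on this event, so $\sigma_k\geq k/(R+c(R))$; combined with $k\leq\tilde n\leq t(R)(R+2c(R))\leq 2R^{\alpha_t+1}$ and the hypothesis $\alpha_t+\alpha_c<1$, the correction factor $e^{kc(R)/(R(R+c(R)))}\leq e^{2R^{\alpha_t+\alpha_c-1}}$ is bounded by $2$ for $R$ large.

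For the inductive step, I would apply the strong Markov property at $\sigma_{k-1}$: on $\{k\leq\tilde n\}\in\mathcal F_{\sigma_{k-1}}$,
\[
r_2(\sigma_k)=e^{-\Delta_k}r_2(\sigma_{k-1})+D_k+X_k,
\]
where $|D_k|\leq 1-e^{-\Delta_k}\leq\Delta_k$ is $\mathcal F_{\sigma_{k-1}}$-measurable and $X_k\mid\mathcal F_{\sigma_{k-1}}$ is centered Gaussian with variance $v_k=(1-e^{-2\Delta_k})/2\leq\Delta_k$. Using that odd Gaussian moments vanish,
\[
\E\bigl[r_2(\sigma_k)^4\mid\mathcal F_{\sigma_{k-1}}\bigr]\leq\bigl(e^{-\Delta_k}|r_2(\sigma_{k-1})|+\Delta_k\bigr)^4+6v_k\bigl(e^{-\Delta_k}|r_2(\sigma_{k-1})|+\Delta_k\bigr)^2+3v_k^2.
\]
Write $G_k:=\E\chi_{\{k\leq\tilde n\}}|r_2(\sigma_k)|^4$ and $B_k:=2|r_2(0)|e^{-k/R}+3$. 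Expanding, taking unconditional expectation (using $\{k\leq\tilde n\}\subset\{k-1\leq\tilde n\}$), applying Jensen to bound $\E\chi_{\{k-1\leq\tilde n\}}|r_2(\sigma_{k-1})|^j\leq G_{k-1}^{j/4}$, and inserting the inductive hypothesis $G_{k-1}\leq B_{k-1}^4$ together with $\Delta_k,v_k\leq 1/(R-c(R))\approx 1/R$, yields a recursion of the shape
\[
G_k\leq(1-4/R)\,B_{k-1}^4+(4/R)\,B_{k-1}^3+(6/R)\,B_{k-1}^2+O(1/R^2).
\]

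To close the induction one verifies the right-hand side is $\leq B_k^4$. Writing $y:=2|r_2(0)|e^{-k/R}$ and using $B_{k-1}=ye^{1/R}+3=y+3+O(y/R)$, $B_k=y+3$, and $B_k^4-B_{k-1}^4\approx-(8/R)\,yB_{k-1}^3$, the required inequality reduces (modulo $O(1/R^2)$) to
\[
B_{k-1}^2\bigl(8y+18\bigr)\geq 0,
\]
which is trivial and produces a fixed margin of order $1/R$ per step. The main obstacle is simply the bookkeeping of the various cross terms: the additive constant $3$ in the target bound is chosen precisely so that the substitution $B_{k-1}=y+3$ turns the relevant quadratic $4B_{k-1}^2-4B_{k-1}(1+y)-6$ into $8y+18\geq 18$, producing a uniform positive margin that dominates the accumulated $O(1/R^2)$ lower-order corrections, which stay $o(1)$ over $O(R^{\alpha_t+1})$ iterations because $\alpha_t<1$.
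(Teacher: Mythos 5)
Your induction has a genuine gap when $\alpha>1-\alpha_c$, a regime the lemma allows since $\alpha$ may be taken arbitrarily close to $1$ while $\alpha_c$ must exceed $\alpha_t/2>0$. The culprit is the leading term of the recursion. On $\{k\leq\tilde n\}$ the increment $\Delta_k$ can be as small as $1/(R+c(R))$, so the honest upper bound on the dominant term is $\E\chi_{\{k\leq\tilde n\}}e^{-4\Delta_k}|r_2(\sigma_{k-1})|^4\leq e^{-4/(R+c(R))}G_{k-1}$, not $(1-4/R)G_{k-1}$; these coefficients differ by roughly $4c(R)/R^2$. When $|r_2(0)|\sim R^\alpha$ and $k$ is of order $1$, one has $B_{k-1}\sim R^\alpha$, so the per-step deficit $4c(R)B_{k-1}^4/R^2\sim R^{4\alpha+\alpha_c-2}$ has to be absorbed by the margin $B_{k-1}^2(8y+18)/R\sim R^{3\alpha-1}$ produced by your closing calculation. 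The deficit dominates exactly when $\alpha>1-\alpha_c$, so it is not part of an innocuous $O(1/R^2)$ and the induction fails to close there. The natural repair is to carry the correct per-step decay rate in the inductive hypothesis, e.g. $\tilde B_k:=|r_2(0)|e^{-k/(R+c(R))}+C$, and only at the very end invoke your preliminary estimate $e^{-k/(R+c(R))}\leq 2e^{-k/R}$ on $\{k\leq\tilde n\}$ (valid since $\alpha_t+\alpha_c<1$) to land on the stated bound. As written you announce that estimate up front but never actually use it in the inductive step, which is precisely where it is needed.

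A secondary issue: $D_k$ is \emph{not} $\mathcal F_{\sigma_{k-1}}$-measurable; it depends on the path of $\theta_1-\theta_2$ over $(\sigma_{k-1},\sigma_k]$ and is correlated with $X_k$, so ``odd Gaussian moments vanish'' is not directly available. The argument can be salvaged because $A=e^{-\Delta_k}r_2(\sigma_{k-1})$ \emph{is} $\mathcal F_{\sigma_{k-1}}$-measurable, so the dangerous term $4A^3\,\E[X_k\mid\mathcal F_{\sigma_{k-1}}]$ genuinely vanishes while the remaining $D_k$--$X_k$ cross moments only contribute at order $A^2\Delta_k\sqrt{v_k}$ and below, which is subdominant to your margin; but this has to be spelled out rather than asserted. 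For comparison, the paper's proof avoids induction entirely: it reads off $\|r_2(k/R)\|_4\leq |r_2(0)|e^{-k/R}+2$ directly from the explicit solution \eqref{eq:solve_r2}, shows $|\sigma_k-k/R|\leq 2t(R)c(R)/R$ on $\{k\leq\tilde n\}$, and bounds the $L^4$-norm of $\chi_{\{k\leq\tilde n\}}(r_2(\sigma_k)-r_2(k/R))$ by $o(|r_2(0)|e^{-k/R}+1)$ term by term, thereby sidestepping the per-step accumulation that trips up the inductive approach.
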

\begin{proof}
Since we have good control over $r_2(t)$ at deterministic times, we will compare $r_2(\sigma_k)$ with $r_2( k/R)$.
    Note that $\sigma_{k}$ is close to $ k/R$ if $k\leq\tilde n$. 
    To be more precise,
    \begin{equation}
    \label{eq:sigma_k_2pik/R}
    \begin{aligned}
        &\la\chi_{\{k\leq\tilde n\}}\lp\sigma_k-\frac{k}{R}\rp\ra\leq\chi_{\{k\leq\tilde n\}}\sum_{j=0}^{k-1}\la\sigma_{j+1}-\sigma_j-\frac{1}{R}\ra\\
        &\leq \frac{t(R)}{1/(R+2c(R))}\lp\frac{1}{R-c(R)}-\frac{1}{R}\rp\leq\frac{2t(R)c(R)}{R}.
    \end{aligned}
    \end{equation}
    By the explicit solution in \eqref{eq:solve_r2}, we know that 
    \begin{equation}
    \label{eq:L4normr4}
        \|r_2(t)\|_4\leq |r_2(0)|e^{-t}+2.
    \end{equation}
    Note that for each $k\in\mathbb N$, we have
    \begin{equation}
        r_2(\sigma_k)=r_2\lp\frac{ k}{R}\rp e^{\frac{ k}{R}-\sigma_k}-e^{-\sigma_k}\int_{ k/R}^{\sigma_k}e^sV_2'(\theta_1(s),\theta_2(s))ds+e^{-\sigma_k}\int_{ k/R}^{\sigma_k}e^sdW_s.
    \end{equation}
    (In the case of $\sigma_k< k/R$, $\int_{ k/R}^{\sigma_k}e^sdW_s$ means $-\int_{\sigma_k}^{ k/R}e^sdW_s$.)
    Then it follows that
    \begin{equation}
    \label{R2TimeDif}
        \begin{aligned}
                   \|\chi_{\{k\leq\tilde n\}}(r_2(\sigma_k)-r_2( k/R))\|_4&\leq 
            \|\chi_{\{k\leq\tilde n\}}(r_2\left(k/R\right)-r_2\left(k/R\right)e^{\frac{ k}{R}-\sigma_k})
            \|_4\\
            \quad+\|\chi_{\{k\leq\tilde n\}}e^{-\sigma_k}\int_{ k/R}^{\sigma_k}e^sV_2'(\theta_1(s),\theta_2(s))ds\|_4
            &+\|\chi_{\{k\leq\tilde n\}}e^{-\sigma_k}\int_{ k/R}^{\sigma_k}e^sdW_s\|_4. 
        \end{aligned}
    \end{equation}
    Here the first term on the right-hand side is bounded by 
    \begin{align*}
    \|\chi_{\{k\leq\tilde n\}}(r_2\left(k/R\right)-r_2\left(k/R\right)e^{\frac{ k}{R}-\sigma_k})\|_4&\leq \frac{3t(R)c(R)}{R}\|\chi_{\{k\leq\tilde n\}}r_2( k/R)\|_4\\
    \leq\frac{3t(R)c(R)}{R}\|r_2( k/R)\|_4
    &\leq \frac{3t(R)c(R)}{R}(|r_2(0)|e^{- k/R}+2).
    \end{align*} 
    The second term in \eqref{R2TimeDif} is bounded by $2|\chi_{\{k\leq\tilde n\}}(\sigma_k-k/R)|\leq\frac{4t(R)c(R)}{R}\to0$. The third term is the fourth root of:
    $$
        \E|\chi_{\{k\leq\tilde n\}}e^{-\sigma_k}\int_{k/R}^{\sigma_k}e^sdW_s|^4
        =\E|\chi_{\{k\leq\tilde n\}}e^{k/R-\sigma_k}\int_{k/R}^{\sigma_k}e^{s-k/R}dW_s|^4.
        $$
{Note that the integral in the right-hand side is a time change of a Brownian motion. Therefore}         
  \begin{gather}   
 \E|\chi_{\{k\leq\tilde n\}}e^{-\sigma_k}\int_{k/R}^{\sigma_k}e^sdW_s|^4     
        =\E|\chi_{\{k\leq\tilde n\}}e^{k/R-\sigma_k}\widetilde W(\frac{1}{2}(e^{2(\sigma_k-k/R)}-e^{-2 k/R}))-\widetilde W(\frac{1}{2}(1-e^{-2 k/R}))|^4\\
        \leq \E \sup_{[-\frac{4t(R)c(R)}{R},\frac{4t(R)c(R)}{R}]}|\widetilde W(\frac{1}{2}(e^{2t}-e^{-2 k/R}))
        -\widetilde W(\frac{1}{2}(1-e^{-2 k/R}))|^4
        \to0,
    \end{gather}
    where $\widetilde W$ is another Brownian motion.
    So, we have the estimate on the $L^4$ norm of difference 
    \begin{equation}
    \label{eq:sigmak}
        \left\|\chi_{\{k\leq\tilde n\}}\left(r_2(\sigma_k)-r_2\left(\frac{k}{R}\right)\right)\right\|_4
        =o(|r_2(0)|e^{-k/R}+1),
    \end{equation}
    proving the desired result.
\end{proof}

\section{Behavior of \texorpdfstring{$r_1$}{r1} away from the origin}
\label{sec:r1}
In this section, we will deal with different temporal and spatial scales that are powers of $R$.
We will prove that, if initially $|r_1(0)|$ is large and $|r_2(0)|$ is relatively small, then, after a certain time, the increment of $r_1(t)$ is made up of a Brownian motion and a small correction term.
Recall $Z(t)=-\int_0^t V_1'(\theta_1(s),\theta_2(s))ds$, and
\begin{equation}
\label{eq4:decomposition}
        r_1(t)=r_1(0)+W_1(t)+Z(t).
\end{equation}

{Let $X_t^T=r_1(t\cdot T)/\sqrt{T}$. The main result of this section essentially states that, if we start with
$X_0^T$ of order 1 and stop it when $X$ reaches a fixed (small) level $\e$, then the resulting process converges as $T\to\infty$ to the Brownian motion stopped at the same level (in fact, in order to complete the proof of convergence, we need a tightness result - Proposition \ref{prop:tight} proved in Section \ref{sec:tight}).}
\begin{proposition}
    \label{prop:outer_martingale_generalized}
    For $\e\!>\!0$, let $\eta\!=\!\inf\{s:|X_s^T|=\e\}$. Then, for each $t\!>\!0$ and bounded $f\in\mathbf{C}^3([0,+\infty))$ with bounded derivatives up to order three, we have
    \begin{equation}
        \E\left[f(|X_{\sigma\wedge\eta}^T|)-f(|X_0^T|)-\frac{1}{2}\int_0^{\sigma\wedge\eta}f''(|X_s^T|)ds\right]\to0,
    \end{equation}as $T\to\infty$, uniformly in $|X_0^T|$ on a compact set $K\!\subset\![\e,+\infty)$, $|r_2(0)|\!\leq\!2\log T$, and stopping time $\sigma$ (w.r.t. $\Tilde{\mathcal F}_\cdot^T$) bounded by $t$.
\end{proposition}
{In the proof we will treat the case of positive $r_1$ and deterministic stopping time $\sigma=t$, since the modifications to get the general case are straightforward. Hence, we will focus on proving
    \begin{equation}
    \label{eq:outer_martingale}
        \E\left[f(X_{t\wedge\eta}^T)-f(X_0^T)-\frac{1}{2}\int_0^{t\wedge\eta}f''(X_s^T)ds\right]\to0,
    \end{equation}as $T\to\infty$, uniformly in $X_0^T$ on a compact set $K\subset[\e,+\infty)$ and $|r_2(0)|\leq2\log T$.
{Since the proof is quite long, we divide it into several steps.}}
\subsection{Short time behavior}
\label{SSShortTime}
{The goal of this section is to prove the following key proposition of the paper and its corollary that will be used in the proofs of Proposition~\ref{prop:outer_martingale_generalized} as well as Proposition~\ref{prop:tight} in Section~\ref{sec:tight}.}
\begin{proposition}
\label{prop:expansion-t(R)}
    Suppose that $|r_1(0)|=R$, $|r_2(0)|\leq R^{{\gamma}}$, where ${\gamma}\in(0,\frac{2}{3})$. 
Suppose that ${\alpha}$ and ${\alpha'}$ satisfy ${\gamma}<{\alpha'}<{\alpha}<2/3$. Then, uniformly in  $R^{{\alpha'}}\leq t(R)\leq R^{{\alpha}}$,
    \begin{equation}
        \E Z(t(R))^2=O((t(R)/R)^2),~\E Z(t(R))=o(t(R)/R),\text{ as }R\to\infty.
    \end{equation}
\end{proposition}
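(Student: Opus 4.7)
The plan is to iterate the per-rotation expansion \eqref{eq:expansion} across the full rotations of $\theta_1$ on $[0,t(R)]$ and to sum the resulting increments. Using the stopping times of \eqref{def:stopping_times} with the given $t(R)$ and $c(R)$, I would write
\begin{equation*}
Z(t(R)) = \sum_{k=0}^{\tilde n-1}\bigl[Z(\sigma_{k+1})-Z(\sigma_k)\bigr] + \bigl[Z(t(R))-Z(\sigma_{\tilde n-1})\bigr],
\end{equation*}
where the last bracket is bounded by $1/(R-c(R))=O(1/R)$ on $\{\tau_c>t(R)\}$. The bad event $\{\tau_c\leq t(R)\}$ is handled separately: a Chebyshev bound using the $L^2$-estimate for $Z$ obtained below, combined with the Gaussian tail of $W_1$ and $\alpha_c>\alpha_t/2$, shows its probability is small enough to render its contribution negligible. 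Applying \eqref{eq:expansion} with $\sigma_k$ as the new initial time (via the strong Markov property) gives, on $\{k<\tilde n\}$,
\begin{equation*}
Z(\sigma_{k+1})-Z(\sigma_k)=\frac{r_2(\sigma_k)}{r_1(\sigma_k)^2}V'(\theta_1(\sigma_k)-\theta_2(\sigma_k)) + (r_2(\sigma_k)^2+1)O(R^{-3})+\Theta(R^{-5/2}),
\end{equation*}
where $|r_1(\sigma_k)|\asymp R$.

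For the second moment, I would apply the triangle inequality in $L^2$ to this decomposition and use Lemma~\ref{lem:moments} to bound $\|r_2(\sigma_k)\chi_{\{k\leq\tilde n\}}\|_4\lesssim R^\alpha e^{-k/R}+1$, which controls both $\|r_2(\sigma_k)\|_2$ and $\|r_2(\sigma_k)^2\|_2=\|r_2(\sigma_k)\|_4^2$. Summing by geometric series, each of the three contributions is $O(t(R)/R)$ under the assumption $\alpha<\alpha_t'\leq\alpha_t<2/3$: the main term gives $O(R^{\alpha-1}+t(R)/R)$, the quadratic error gives $O(R^{2\alpha-2}+t(R)/R^2)$, and the random error gives $O(t(R)/R^{3/2})$. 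This yields $\|Z(t(R))\|_2=O(t(R)/R)$ and therefore $\E Z(t(R))^2=O((t(R)/R)^2)$.

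For the first moment, the two error terms contribute $o(t(R)/R)$ by the same moment estimates: summing $(r_2(\sigma_k)^2+1)O(R^{-3})$ gives $O(R^{2\alpha-2}+t(R)/R^2)$, which is $o(t(R)/R)$ because $\alpha_t'>\alpha$, while $|\E\Theta(R^{-5/2})|\leq R^{-5/2}$ summed over $\tilde n$ terms contributes $O(t(R)/R^{3/2})$. The delicate ingredient is the main-term sum
\begin{equation*}
\sum_{k=0}^{\tilde n-1}\E\!\left[\frac{r_2(\sigma_k)}{r_1(\sigma_k)^2}V'(\theta_1(\sigma_k)-\theta_2(\sigma_k))\right].
\end{equation*}
For $k\leq R|\log R|^2$ I would use the crude bound $\|r_2(\sigma_k)\|_2\lesssim R^\alpha$, yielding a contribution $O(R^{\alpha-1}|\log R|^2)=o(t(R)/R)$ (again since $\alpha_t'>\alpha$). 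For $k>R|\log R|^2$ I would use Lemma~\ref{lem:llt} to show that each per-step expectation is $o(R^{-2})$, so that the sum is $o(\tilde n/R^2)=o(t(R)/R)$.

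The main obstacle is this last step: Lemma~\ref{lem:llt} controls $\E[r_2(t)V'(\theta_2(t)-\theta_0)]$ for a \emph{deterministic} $\theta_0$, whereas in our sum the argument of $V'$ contains the random $\theta_1(\sigma_k)$, which is not independent of $(r_2,\theta_2)$. To resolve this, I would condition on $\mathcal F_\tau$ for a deterministic time $\tau$ slightly earlier than $\sigma_k$ (say $\tau=k/R-|\log R|^2$): on the typical event that $r_1$ changes little over $[\tau,\sigma_k]$ (controlled by the same analysis of $\tau_c$), $\theta_1(\sigma_k)$ is approximated, up to a small error, by the $\mathcal F_\tau$-measurable quantity $\theta_1(\tau)+r_1(\tau)(\sigma_k-\tau)\bmod 1$, so that the conditional expectation reduces to an instance of Lemma~\ref{lem:llt} with a fixed $\theta_0$, and the uniformity of that lemma in $\theta_0$ lets us pass to an unconditional bound. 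Combining this with the $L^2$-comparison between $(r_2,\theta_2)$ and its Markov analogue from Section~\ref{sec:r2theta2} to absorb the error in approximating $\theta_1(\sigma_k)$ by its $\mathcal F_\tau$-measurable surrogate completes the argument.
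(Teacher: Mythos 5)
Your overall skeleton matches the paper's: iterate \eqref{eq:expansion} over the full-rotation stopping times $\sigma_k$, bound the error terms via Lemma~\ref{lem:moments}, and reduce the main term to an averaging statement based on Lemma~\ref{lem:llt}. The $L^2$ bound and the treatment of the quadratic and random errors are fine. But the handling of the main term---which you correctly identify as the delicate ingredient---has a genuine gap.

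Your conditioning fix does not work. First, the quantity $\theta_1(\tau)+r_1(\tau)(\sigma_k-\tau)\bmod 1$ is not $\mathcal F_\tau$-measurable, because $\sigma_k$ is a stopping time determined by the trajectory well beyond $\tau$. If instead you replace $\sigma_k$ by the deterministic time $k/R$, measurability is restored, but then the approximation error picks up a factor $r_1\cdot|\sigma_k-k/R|\asymp R\cdot t(R)c(R)/R=t(R)c(R)$, which diverges. More fundamentally, even the intrinsic error in your linearisation, $\int_\tau^{\sigma_k}\bigl(r_1(s)-r_1(\tau)\bigr)\,ds$, is of order $c(R)\,|\log R|^2$ on the ``typical'' event $\{\tau_c>t(R)\}$, and $c(R)=R^{\alpha_c}\to\infty$; so the proposed approximation of $\theta_1(\sigma_k)$ is off by a quantity much larger than the circle's circumference and is therefore useless. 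The root of the problem is that $r_1$ is large, so any $o(1)$ error in the time variable produces an $O(R\cdot o(1))$ winding error in $\theta_1$, and no realistic ``typical event'' controls $r_1$ finely enough to defeat this.

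The step you are missing is the observation that the $\sigma_k$ are \emph{exact} full-rotation times for $\theta_1$, so that $\theta_1(\sigma_k)$ returns very close to $\theta_1(0)$: on the circle,
\begin{equation*}
\theta_1(\sigma_k)=\theta_1(0)+\sum_{j=0}^{k-1}\chi_{\{j<\tilde n\}}\,O(1/R^2)+\sum_{j=0}^{k-1}\chi_{\{j<\tilde n\}}\int_{\sigma_j}^{\sigma_{j+1}}\!\!\int_{\sigma_j}^{t}dW_1(u)\,dt,
\end{equation*}
since $r_1(\sigma_j)\,(\sigma_{j+1}-\sigma_j)=\pm1\equiv 0 \pmod 1$, yielding $\|\theta_1(\sigma_k)-\theta_1(0)\|_2=O(t(R)/R)$ (this is \eqref{eq:substitute-theta1}). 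This allows one to replace $\theta_1(\sigma_k)$ by the \emph{fixed} $\theta_1(0)$, after which it suffices to replace $r_2(\sigma_k),\theta_2(\sigma_k)$ by $r_2(k/R),\theta_2(k/R)$ (which is harmless since $r_2$ is small, so $\theta_2$ winds slowly) and to apply Lemma~\ref{lem:llt} at the deterministic time $k/R$ with $\theta_0=\theta_1(0)$. This is exactly what Lemma~\ref{lem:expansion-sigma} does, and the paper's proof of the present proposition is just the short reduction from $Z(t(R))$ to $Z(\sigma_{\tilde n})$ together with Lemma~\ref{lem:expansion-sigma} and Lemma~\ref{lem:exit_probability}. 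Minor point: your telescope should terminate at $Z(\sigma_{\tilde n})$, not $Z(\sigma_{\tilde n-1})$.
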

\begin{corollary}
\label{cor:moments}
    Suppose that $|r_1(0)|=R$, $|r_2(0)|\leq R^{{\gamma}}$, where ${\gamma}\in(0,\frac{2}{3})$. 
For each ${\alpha}$ and ${\alpha'}$ that satisfy ${\gamma}<{\alpha'}<{\alpha}<2/3$, as $R\to\infty$, uniformly in  $R^{{\alpha'}}\leq t(R)\leq R^{{\alpha}}$,
    \begin{align}
        &\E r_1(t(R))=r_1(0)+o(t(R)/R),\label{eq:increment_of_first_moment}\\
        &\E r_1(t(R))^2=r_1(0)^2+t(R)+o(t(R)),\label{eq:second_moment}\\
        &\E [r_1(t(R))-r_1(0)]^2=t(R)+o(t(R)).\label{eq:increment_of_second_moment}
    \end{align}
\end{corollary}
The proof of Proposition~\ref{prop:expansion-t(R)} is deferred to later after we obtain technical results that will be needed. 
The next lemma provides a rough estimate that shows that $r_1(t)$ cannot exit a large interval too quickly.
\begin{lemma}
\label{lem:exit_probability}
Suppose that $|r_1(0)|=R$, $|r_2(0)|\leq R^{{\gamma}}$, where ${\gamma}\in(0,\frac{2}{3})$. 
For each ${\alpha}$, ${\alpha'}$, and ${\beta}$ that satisfy ${\gamma}<{\alpha'}<{\alpha}<2/3$ and ${\alpha}/2<{\beta}<1/3$, uniformly in  $R^{{\alpha'}}\leq t(R)\leq R^{{\alpha}}$,
\begin{equation}
    \Prob(\tau_c<t(R))=O(({t(R)}/{Rc(R)})^4),\text{ as $R\to\infty$,}
\end{equation}where $c(R)=R^{{\beta}}$ and $\tau_c=\inf\{t:|r_1(t)-r_1(0)|=c(R)\}$.
\end{lemma}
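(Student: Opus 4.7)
The plan is to decompose $r_1(t)-r_1(0)=W_1(t)+Z(t)$ and to control the two contributions separately via the union bound
\[
\Prob(\tau_c<t(R))\leq \Prob\Bigl(\sup_{s\leq t(R)}|W_1(s)|\geq c(R)/2\Bigr) + \Prob\Bigl(\sup_{s\leq \tau_c\wedge t(R)}|Z(s)|\geq c(R)/2\Bigr).
\]
For the Brownian motion piece, I will invoke the Gaussian reflection principle to get $\Prob(\sup_{s\leq t(R)}|W_1(s)|\geq c(R)/2)\leq 4\exp(-c(R)^2/(8t(R)))=4\exp(-R^{2\alpha_c-\alpha_t}/8)$. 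Since $\alpha_c>\alpha_t/2$, this tail decays faster than any polynomial in $R$ and is in particular $o((t(R)/(Rc(R)))^4)$, hence negligible against the target estimate.

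For the $Z$ piece I will iterate the intermediate expansion \eqref{eq:intermediate_expansion} along the rotation times $\sigma_k$ from \eqref{def:stopping_times}. On the event $\{k<\tilde n\}$ the value $r_1(\sigma_k)$ lies in $[R-c(R),R+c(R)]$, so \eqref{eq:intermediate_expansion} applied with starting point $(r_1(\sigma_k),r_2(\sigma_k),\theta_1(\sigma_k),\theta_2(\sigma_k))$ yields
\[
\chi_{\{k<\tilde n\}}(Z(\sigma_{k+1})-Z(\sigma_k))=\chi_{\{k<\tilde n\}}\bigl[r_2(\sigma_k)O(R^{-2})+O(R^{-3})+\tilde\Theta(R^{-5/2})\bigr].
\]
Telescoping, applying the $L^4$ triangle inequality term by term, and using Lemma~\ref{lem:moments} (which gives $\|\chi_{\{k\leq\tilde n\}}r_2(\sigma_k)\|_4\leq 2R^\alpha e^{-k/R}+3$) together with the deterministic bound $\tilde n\leq 2t(R)R$, I obtain
\[
\Bigl\|\max_{0\leq n\leq \tilde n}|Z(\sigma_n)|\Bigr\|_4\leq \frac{C}{R^2}\bigl(R^{1+\alpha}+t(R)R\bigr)+O\bigl(t(R)/R^{3/2}\bigr)=O\bigl(t(R)/R\bigr),
\]
where the $R^{\alpha-1}$ contribution is absorbed into $t(R)/R$ because $\alpha<\alpha_t$. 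Since $Z$ is $1$-Lipschitz and $\sigma_{k+1}-\sigma_k\leq 2/R$, the continuous supremum over $[0,\tau_c\wedge t(R)]$ differs from $\max_k|Z(\sigma_k)|$ by at most $2/R$, which is much smaller than $c(R)/4$ for $R$ large. Markov's inequality then yields $\Prob(\sup_{s\leq\tau_c\wedge t(R)}|Z(s)|\geq c(R)/2)=O((t(R)/(Rc(R)))^4)$. Combined with the Brownian bound, this gives the lemma.

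The main obstacle is keeping the $L^4$ estimate of the telescoped sum at the optimal scale $t(R)/R$. The crucial mechanism is that the geometric factor $e^{-k/R}$ in Lemma~\ref{lem:moments} balances against the $\tilde n\sim t(R)R$ summation indices to produce $\sum_{k\geq 0}R^\alpha e^{-k/R}=O(R^{1+\alpha})$, so that the initial $R^\alpha$ size of $\|r_2(\sigma_k)\|_4$ does not accumulate over all rotations. The $\tilde\Theta(R^{-5/2})$ noise term is handled by a crude $L^4$ triangle inequality; any attempt to exploit cancellation here would only improve constants, since the resulting $O(t(R)/R^{3/2})$ bound is already dominated by $t(R)/R$.
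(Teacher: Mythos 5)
Your proof is correct and follows essentially the same route as the paper: both iterate the intermediate expansion~\eqref{eq:intermediate_expansion} along the rotation times $\sigma_k$, invoke Lemma~\ref{lem:moments} to control the $r_2$-factor via the geometric decay $e^{-k/R}$, accumulate the $L^4$ norms to the $O(t(R)/R)$ bound, and finish with Chebyshev/Markov. The only difference is organizational: the paper estimates $|r_1(\sigma_{\tilde n})-r_1(0)|$ at the terminal rotation time and then argues that the interval $[\tau_c,\sigma_{\tilde n}]$ of length at most $2/R$ cannot produce an additional change of order $c(R)$, whereas you bound the running maximum of $Z$ directly via a telescoping $L^4$ triangle inequality and handle $W_1$ separately with the reflection principle in a single union bound.
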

\begin{proof}
    Let the stopping times $\sigma_k$, $k\geq0$, be defined as in \eqref{def:stopping_times}.
    We will use the expansion \eqref{eq:intermediate_expansion} to control the desired probability.
    \begin{align*}
        r_1(\sigma_{\tilde n})&=r_1(0)+W_1(\sigma_{\tilde n})+\sum_{k=0}^{\tilde n-1}r_2(\sigma_k)\cdot O(R^{-2})+O(t(R)R^{-2})+\tilde\Theta(t(R)R^{-3/2})\\
        &=:r_1(0)+W_1(\sigma_{\tilde n})+\mathcal A_1+\mathcal A_2+\mathcal A_3.
    \end{align*}
    By Lemma~\ref{lem:moments} and the fact that $\tilde n<2t(R)R$, we obtain that
        $\|\mathcal A_1\|_4=O(t(R)/R)$.
    Since $\sigma_{\tilde n}<t(R)+2/R$, the probability of $|W_1(\sigma_{\tilde n})|$ being significantly larger than $\sqrt{t(R)}$ is exponentially small. So
    \begin{align*}
        \Prob(|r_1(\sigma_{\tilde n})-r_1(0)|>\frac{4c(R)}{5})
        &\leq \Prob(|W_1(\sigma_{\tilde n})|>c(R)/5)+\sum_{j=1}^3\Prob(|\mathcal A_j|>c(R)/5)\\
        &\lesssim \frac{1}{R^4}+\frac{t(R)^4}{R^4c(R)^4}+0+\frac{t(R)^4}{R^6c(R)^4}\lesssim\frac{t(R)^4}{R^4c(R)^4}.
    \end{align*}
    On the other hand, since on the event $\{\tau_c<t(R)\}$, we have $0\leq\sigma_{\tilde n}-\tau_c<2/R$. Then,
    \begin{align*}
        &\Prob(\tau_c<t(R),|r_1(\sigma_{\tilde n})-r_1(0)|\leq \frac{4c(R)}{5})\\
        &\leq \Prob(\tau_c<t(R),|r_1(\tau_c)-r_1(\sigma_{\tilde n})|> c(R)/5)\\
        &\leq\Prob(\tau_c<t(R),|\int_{\tau_c}^{\sigma_{\tilde n}}V_1'(\theta_1(s),\theta_2(s))ds|+|W_1(\sigma_{\tilde n})-W_1(\tau_c)|>c(R)/5)\\
        &\leq\Prob(\tau_c<t(R),|W_1(\sigma_{\tilde n})-W_1(\tau_c)|>c(R)/6)\\
        &\leq \Prob(\sup_{0\leq s\leq2/R}|W_1(\tau_c+s)-W_1(\tau_c)|>c(R)/6)
    \end{align*}which is exponentially small {w.r.t. $R$}.
    The proof is completed by taking the sum of two probabilities.
\end{proof}

\begin{lemma}
\label{lem:expansion-sigma}
    Suppose that $|r_1(0)|=R$, $|r_2(0)|\leq R^{{\gamma}}$, where ${\gamma}\in(0,\frac{2}{3})$. 
Suppose that ${\alpha}$, ${\alpha'}$, and ${\beta}$ satisfy ${\gamma}<{\alpha'}<{\alpha}<2/3$ and ${\alpha}/2<{\beta}<1/3$. 
Let $\tau_c$, $\sigma_{k}$, $k\geq 0$, and $\tilde n$, be defined as in \eqref{def:stopping_times} with $c(R)=R^{{\beta}}$ and let $\tilde  Z=Z(\sigma_{\tilde n})$.
Then, uniformly in  $R^{{\alpha'}}\leq t(R)\leq R^{{\alpha}}$,
    \begin{equation}
        \E {\tilde  Z}^2=O((t(R)/R)^2),~ \E {\tilde  Z}=o(t(R)/R),\text{ as }R\to\infty.
    \end{equation}
\end{lemma}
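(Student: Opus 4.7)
The plan is to iterate \eqref{eq:expansion} on the rotation intervals $[\sigma_k,\sigma_{k+1}]$ via the strong Markov property. Since $k<\tilde n$ forces $|r_1(\sigma_k)|\in[R-c(R),R+c(R)]$ and $\sigma_{k+1}-\sigma_k=1/|r_1(\sigma_k)|$, \eqref{eq:expansion} applied at $(r_1(\sigma_k),r_2(\sigma_k),\theta_1(\sigma_k),\theta_2(\sigma_k))$ gives
\begin{equation*}
Z(\sigma_{k+1})-Z(\sigma_k)=M_k+E_k^{(1)}+E_k^{(2)},
\end{equation*}
where $M_k=r_1(\sigma_k)^{-2}r_2(\sigma_k)V'(\theta_1(\sigma_k)-\theta_2(\sigma_k))$, $E_k^{(1)}=(r_2(\sigma_k)^2+1)O(R^{-3})$, and $\|E_k^{(2)}\|_4\lesssim R^{-5/2}$. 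Summing over $k=0,\ldots,\tilde n-1$ (with $\tilde n\leq 2t(R)R$) decomposes $\tilde Z$ into these three parts.

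For the $L^2$ bound, the triangle inequality in $L^2$ combined with Lemma~\ref{lem:moments} handles each piece. Using $\|r_2(\sigma_k)\chi_{\{k\leq\tilde n\}}\|_2\leq 2r_2(0)e^{-k/R}+3$ and $\|r_2(\sigma_k)^2\chi_{\{k\leq\tilde n\}}\|_2\leq(2r_2(0)e^{-k/R}+3)^2$, and summing the resulting geometric-plus-constant series over $k\leq 2t(R)R$, we obtain $\|\sum M_k\|_2\lesssim(r_2(0)+t(R))/R$, $\|\sum E_k^{(1)}\|_2\lesssim(r_2(0)^2+t(R))/R^2$, and $\|\sum E_k^{(2)}\|_2\lesssim t(R)/R^{3/2}$. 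Under the hypotheses $\alpha<\alpha_t'<2/3$, each of these is $\lesssim t(R)/R$: indeed $r_2(0)\leq R^\alpha\leq R^{\alpha_t'}=t(R)$, and $r_2(0)^2\leq R^{2\alpha}\leq Rt(R)$ follows because $\alpha_t'>2\alpha-1$ (trivially when $\alpha\leq 1/2$, and from $\alpha_t'>\alpha>2\alpha-1$ when $\alpha<1$). Combined, this proves $\E\tilde Z^2=O((t(R)/R)^2)$.

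For the sharper bound $\E\tilde Z=o(t(R)/R)$, cancellation in $\sum_k\E M_k$ is essential, since the crude triangle inequality above only yields $O$. The plan is to replace each $\sigma_k$ by the deterministic time $k/R$ (absorbing the substitution error via \eqref{eq:sigmak} and the Lipschitz regularity of $V'$) and to split the sum at $k\approx R|\log R|^2$. The initial segment contains at most $R|\log R|^2$ terms of $L^1$-size $\lesssim(r_2(0)+1)/R^2$, contributing $O((r_2(0)+1)|\log R|^2/R)=o(t(R)/R)$ since $\alpha<\alpha_t'$ provides polynomial slack. For the tail one invokes Lemma~\ref{lem:llt}: each $\E[r_2(k/R)V'(\theta_1(k/R)-\theta_2(k/R))]\to 0$ as $R\to\infty$, uniformly over the range of $k$, yielding after summation $o(1)\cdot(t(R)R)\cdot R^{-2}=o(t(R)/R)$. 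The principal obstacle is that Lemma~\ref{lem:llt} is stated with $\theta_0$ deterministic while our quantity involves the random $\theta_1(k/R)$. I expect this to be bridged by following the argument in the proof of Lemma~\ref{lem:llt} — passing from $(r_2,\theta_2)$ to the Markov analogue \eqref{eq:markov_analogue} via $L^1$-approximation — and constructing (or coupling) the Markov analogue so that it is asymptotically independent of $\theta_1$, so that the uniformity in $\theta_0$ of Lemma~\ref{lem:homogeneous_llt} permits substituting the random $\theta_1(k/R)$ as the parameter.
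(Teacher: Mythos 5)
Your $L^2$ argument is sound (it parallels the paper, which actually uses the simpler expansion \eqref{eq:intermediate_expansion} rather than \eqref{eq:expansion} for the second moment, but the extra $(r_2^2+1)$ error you carry is absorbed as you say).

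For the first moment, you have correctly identified the crux but not resolved it, and the speculative fix in your last paragraph points in the wrong direction. Lemma~\ref{lem:llt} requires a \emph{deterministic} $\theta_0$; you propose to feed in the random $\theta_1(k/R)$ and then argue independence between $\theta_1$ and a coupled Markov analogue of $(r_2,\theta_2)$. This is not straightforward: $\theta_1(k/R)$ depends on $W_2$ through the coupling term $Z$, so exact independence fails, and establishing asymptotic independence would be a nontrivial additional piece of work. Moreover, once you replace $\sigma_k$ by $k/R$ inside $\theta_1$, you lose control: $\theta_1(k/R)\approx\theta_1(0)+r_1(0)k/R+\int_0^{k/R}W_1$, and the time-integral of $W_1$ over a window of length up to $R^{\alpha_t}$ is $\tilde\Theta(R^{3\alpha_t/2})$, which can be of order $1$ on $\mathbb S^1$ — so $\theta_1(k/R)$ is genuinely spread out and not close to anything deterministic.

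The resolution in the paper exploits the very reason the stopping times $\sigma_k$ were introduced: they are (approximate) full-rotation times for $\theta_1$, so on $\mathbb{S}^1$ one has $\theta_1(\sigma_k)\approx\theta_1(0)$. Precisely, one shows $\|\theta_1(\sigma_k)-\theta_1(0)\|_2=O(t(R)/R)$ (the drift part of each rotation contributes $O(1/R^2)$ deterministically, and the $W_1$-fluctuation over each rotation is a martingale increment of size $\Theta(R^{-3/2})$, so the cumulative $L^2$ error stays $O(t(R)/R)$). This lets you replace the random $\theta_1(\sigma_k)$ by the \emph{deterministic} $\theta_1(0)$ in $V'(\theta_1(\sigma_k)-\theta_2(\sigma_k))$ at a cost of $o(t(R)R)$ in the sum, after which Lemma~\ref{lem:llt} applies with $\theta_0=\theta_1(0)$ exactly as stated. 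Note that for $\theta_2$ one can — and the paper does — substitute $\sigma_k\mapsto k/R$, because Lemma~\ref{lem:llt} is set up for $\theta_2$ at a deterministic time; but for $\theta_1$ the substitution must be to $\theta_1(0)$, not $\theta_1(k/R)$. This asymmetric treatment of $\theta_1$ and $\theta_2$ is the missing idea in your proposal.
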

\begin{proof}
    The result on the second moment does not require much extra work compared to \eqref{eq:intermediate_expansion}.
    Indeed, from \eqref{eq:the_system} and \eqref{eq:intermediate_expansion}, we know that $\DS{\tilde  Z}=\sum_{k=0}^{\tilde n-1}{\tilde  Z}_k$, where
    \[
    {\tilde  Z}_k = -\int_{\sigma_k}^{\sigma_{k+1}}V_1'(\theta_1(s),\theta_2(s))ds=r_2(\sigma_k)\cdot O(R^{-2})+O(R^{-3})+\tilde\Theta(R^{-5/2}).
    \]
    Thus, by Lemma~\ref{lem:moments}, we obtain
    \[
    \|{\tilde  Z}_k\|_2\lesssim \|r_2(\sigma_k)\|_2/R^2+R^{-5/2}\leq (2|r_2(0)|e^{- k/R}+3)/R^2+R^{-5/2}\lesssim (t(R)e^{- k/R}+1)/R^2.
    \]
    Note that $\tilde n<L(R):=\lfloor t(R)(R+2c(R))\rfloor$. So $\DS\|{\tilde  Z}\|_2\leq\sum_{k=0}^{\tilde n-1}\|{\tilde  Z}_k\|_2=O(t(R)/R)$.

    The result on the first moment requires more delicate treatment.
    Since $\tilde n<L(R)$, by \eqref{eq:expansion},
    \begin{align}
            \E {\tilde  Z}&=\sum_{k=0}^{L(R)}\E\left(\chi_{\{k<\tilde n\}}{\tilde  Z}_k\right)=\sum_{k=0}^{L(R)}\E\left(\chi_{\{k<\tilde n\}}\E({\tilde  Z}_k|\mathcal F_{\sigma_{k}})\right)\nonumber\\
            &=-\sum_{k=0}^{L(R)}\E\left(\chi_{\{k<\tilde n\}}\left[\frac{r_2(\sigma_k)}{r_1(\sigma_k)|r_1(\sigma_k)|}U_2'(\theta_1(\sigma_k),\theta_2(\sigma_k))\right]\right)\label{term:first_moment_one}\\
            &\quad+\sum_{k=0}^{L(R)}\E\left(\chi_{\{k<\tilde n\}}(r_2(\sigma_k)^2+1)\cdot O(1/R^3)\right)+ O(t(R)R^{-3/2}).\label{term:first_moment_two}
        \end{align}
    By Lemma~\ref{lem:moments}, \eqref{term:first_moment_two} is bounded by \[O((r_2(0)^2R+Rt(R))/R^3)+O(t(R)/R^{-3/2})=O(r_2(0)^2/R^2)+o(t(R)/R)=o(t(R)/R)\] since $t(R)\geq R^{{\gamma}}$.
    
    It remains to bound \eqref{term:first_moment_one}.
    The key observation is that, for most values of $0\leq k<\tilde n$, 
    \begin{align*}
        \frac{r_2(\sigma_k)}{|r_1(\sigma_k)|}U_2'(\theta_1(\sigma_k),\theta_2(\sigma_k))&\approx \frac{r_2(k/R)}{|r_1(0)|}U_2'(\theta_1(0),\theta_2(k/R))\\
        &\approx U(\theta_1(0),\theta_2((k+1)/R))-U(\theta_1(0),\theta_2(k/R)),
    \end{align*}and the sum of such terms remains bounded.
    The symbol $\approx$ is used here rather loosely to explain that the terms in \eqref{term:first_moment_one} can be modified without affecting the desired bound.
    This will be explained more precisely later in \eqref{eq4:riemann}.
    We will now replace $r_1(\sigma_k)$ with $r_1(0)$, $\theta_1(\sigma_k)$ with $\theta_1(0)$, $r_2(\sigma_k)$ with $r_2(k/R)$, and $\theta_2(\sigma_k)$ with $\theta_2(k/R)$, for $k<\tilde n$, and estimate the induced errors in the following steps.
    
    \textbf{1.} To start with, we substitute $r_1(\sigma_k)$ by $r_1(0)$ for $k<\tilde n$. Due to the bound on $r_2(\sigma_k)$ in Lemma~\ref{lem:moments}, the difference made to \eqref{term:first_moment_one} will be $o(t(R)/R)$. 
    So, it remains to show that 
    \begin{equation}
    \label{eq:averaging_1}
        \sum_{k=0}^{L(R)}\E\left(\chi_{\{k<\tilde n\}}\left[r_2(\sigma_k)U_2'(\theta_1(\sigma_k),\theta_2(\sigma_k))\right]\right)=o(t(R)R).
    \end{equation}
    
    \textbf{2.} We observe that $\theta_1(\sigma_k)$, $0\leq k< \tilde n<L(R)$, are obtained from each other by almost exact rotations. 
    Namely, for each such $k$, we have that
    \begin{equation}
    \label{eq:substitute-theta1}
        \|\theta_1(\sigma_k)-\theta_1(0)\|_2=O(t(R)/R),
    \end{equation}as $R\to\infty$. Indeed, recalling the equations \eqref{eq:the_system} for $r_1$ and $\theta_1$ and boundedenss of $V'_1$, we obtain:
    \begin{equation}
        \theta_1(\sigma_{k})=\theta_1(0)+\sum_{j=0}^{k-1}\chi_{\{j<\tilde n\}}[r_1(\sigma_j)(\sigma_{j+1}-\sigma_j)+ O(1/R^2)+\int_{\sigma_j}^{\sigma_{j+1}}\int_{\sigma_j}^tdW_1(u)dt].
    \end{equation}
    The total contribution from the first two terms in the bracket is $O(t(R)/R)$ because $\chi_{\{j<\tilde n\}}(\sigma_{j+1}-\sigma_j)=1/|r_1(\sigma_j)|$ and $k<L(R)$. By noting that $\chi_{\{j\geq\tilde n\}}(\sigma_{j+1}-\sigma_j)=0$, \eqref{eq:substitute-theta1} follows from
        \begin{align*}
            \E\lb\sum_{j=0}^{k-1}\chi_{\{j<\tilde n\}}\int_{\sigma_j}^{\sigma_{j+1}}\int_{\sigma_j}^tdW_1(u)dt\rb^2&=\E\lb\sum_{j=0}^{k-1}\int_{\sigma_j}^{\sigma_{j+1}}\int_{\sigma_j}^tdW_1(u)dt\rb^2\\
            =\E\sum_{j=0}^{k-1}\lb\int_{\sigma_j}^{\sigma_{j+1}}\int_{\sigma_j}^tdW_1(u)dt\rb^2
            &\leq\E\sum_{j=0}^{L(R)}\lb\int_{\sigma_j}^{\sigma_{j+1}}\int_u^{\sigma_{j+1}}dtdW_1(u)\rb^2\\
            \leq \sum_{j=0}^{L(R)}\E\lp\int_{\sigma_{j}}^{\sigma_{j+1}}(\sigma_{j+1}-u)^2du\rp
            &\leq\frac{t(R)}{R^2}.
        \end{align*}
    Now we can replace the left-hand side \eqref{eq:averaging_1} by
    \begin{equation}
    \label{eq:averaging_2}
        \sum_{k=0}^{L(R)}\E\chi_{\{k<\tilde n\}}\left[r_2(\sigma_k)U_2'(\theta_1(0),\theta_2(\sigma_k))\right],
    \end{equation}
    with difference estimated by
    \begin{gather}
        \sum_{k=0}^{L(R)}\left|\E\chi_{\{k<\tilde n\}}r_2(\sigma_k)[U_2'(\theta_1(0),\theta_2(\sigma_k))-U_2'(\theta_1(\sigma_k),\theta_2(\sigma_k))]\right|\\
        \leq \sum_{k=0}^{L(R)}\|\chi_{\{k<\tilde n\}}r_2(\sigma_k)\|_2\cdot\|\theta_1(0)-\theta_1(\sigma_k)\|_2\lesssim\sum_{k=0}^{L(R)}(2e^{-k/R}|r_2(0)|+3)\cdot t(R)/R\\
        \lesssim t(R)R\cdot 2t(R)/R=o(t(R)R).
    \end{gather}
    
    \textbf{3.} It remains to estimate \eqref{eq:averaging_2}. Recalling \eqref{eq:sigmak}, \eqref{eq:averaging_2} can be approximated by
    \begin{equation}
        \sum_{k=0}^{L(R)}\E\chi_{\{k<\tilde n\}}\left[r_2(k/R)U_2'(\theta_1(0),\theta_2(\sigma_k))\right]
    \end{equation}
    since $\E|\chi_{\{k\leq\tilde n\}}(r_2(\sigma_k)-r_2(k/{R}))|=o( r_2(0)e^{-k/R}+1)$. 
    Now we discard the first $\sqrt{t(R)}R$ terms {(where $r_2(k/R)$ can be large in $L^2$)}, which will not make a big difference since, by \eqref{eq:solve_r2} (cf. \eqref{eq:L4normr4}),
    \begin{equation}
    \begin{aligned}
        &\left|\sum_{k=0}^{\lfloor\sqrt{t(R)}R\rfloor}\E\chi_{\{k<\tilde n\}}\left[r_2(k/R)U_2'(\theta_1(0),\theta_2(\sigma_k))\right]\right|\\
        &\lesssim\sum_{k=0}^{\lfloor\sqrt{t(R)}R\rfloor}(|r_2(0)|e^{-k/R}+2)=O(r_2(0)R)+O(\sqrt{t(R)}R)=o(t(R)R).
    \end{aligned}
    \end{equation}
    
    \textbf{4.}
    For the tail of the series, we substitute $\theta_2(\sigma_k)$ by $\theta_2(k/R)$, and the difference is
    \begin{align*}
        &\sum_{k=\lceil\sqrt{t(R)}R\rceil}^{L(R)}\E\chi_{\{k<\tilde n\}}\left[r_2(k/R)[U_2'(\theta_1(0),\theta_2(k/R))-U_2'(\theta_1(0),\theta_2(\sigma_k))]\right]\\
        &\lesssim \sum_{k=\lceil\sqrt{t(R)}R\rceil}^{L(R)}\|r_2(k/R)\|_2\cdot\sqrt{\E\chi_{\{k<\tilde n\}}|\theta_2(k/R)-\theta_2(\sigma_k)|}\\
        &\lesssim \sum_{k=\lceil\sqrt{t(R)}R\rceil}^{L(R)}\|r_2(k/R)\|_2\cdot\sqrt{\E\int_{k/R-2t(R)c(R)/R}^{k/R+2t(R)c(R)/R}|r_2(s)|ds}\\
        &\lesssim \sum_{k=\lceil\sqrt{t(R)}R\rceil}^{L(R)} \sqrt{t(R)c(R)/R}=o(t(R)R),
    \end{align*}where we used the Cauchy-Schwarz inequality and the fact that $U_{22}''$ is bounded in the first inequality, $\eqref{eq:sigma_k_2pik/R}$ in the second inequality, and \eqref{eq:solve_r2} (cf. \eqref{eq:L4normr4}) in the third inequality.
    
    Similarly, we can discard the last $4t(R)c(R)$ terms. Now the problem reduces to proving
    \begin{equation}
    \label{eq4:riemann}
        \sum_{k=\lceil\sqrt{t(R)}R\rceil}^{\lfloor t(R)(R-2c(R))\rfloor}\E\chi_{\{k<\tilde n\}}\left[r_2(k/R)U_2'(\theta_1(0),\theta_2(k/R))\right]=o(t(R)R).
    \end{equation}
    We can get rid of the indicator function by the Cauchy-Schwarz inequality, \eqref{eq:L4normr4}, and Lemma~\ref{lem:exit_probability}, since $\Prob(k\geq\tilde n)\leq \Prob(\tau_c<t(R))$ for $k\leq t(R)(R-2c(R))$. { Multiplying both sides by $ 1/R$, we reduce the problem to proving that
    \begin{equation}
        \frac{1}{R}\sum_{k=\lceil\sqrt{t(R)}R\rceil}^{\lfloor t(R)(R-2c(R))\rfloor}\E\left[r_2(k/R)U_2'(\theta_1(0),\theta_2(k/R))\right]=o(t(R)).
    \end{equation}
     The left-hand side is a Riemann sum for the integral
    \begin{gather}
        \E\int_{\lceil\sqrt{t(R)}R\rceil/R}^{\lfloor t(R)(R-2c(R))+1\rfloor/R}r_2(s)U_2'(\theta_1(0),\theta_2(s))ds\\
        =\E U(\theta_1(0),\theta_2(\lfloor t(R)(R-2c(R))+1\rfloor/R))-\E U(\theta_1(0),\theta_2(\lceil\sqrt{t(R)}R\rceil/R)),
    \end{gather}which is bounded.
    The absolute value of the difference between the Riemann sum and the integral is bounded by $\DS \sum_{k=\lceil\sqrt{t(R)}R\rceil}^{\lfloor t(R)(R-2c(R))\rfloor} B_k$, where
        \begin{align}
            B_k&=\E\int_{k/R}^{(k+1)/R}\left|r_2(s)U_2'(\theta_1(0),\theta_2(s))-r_2(k/R)U_2'(\theta_1(0),\theta_2(k/R))\right|ds\\
            &\leq\E\int_{k/R}^{(k+1)/R}|r_2(s)|\cdot\left|U_2'(\theta_1(0),\theta_2(s))-U_2'(\theta_1(0),\theta_2(k/R))\right|ds\\
            &\quad+\E\int_{k/R}^{(k+1)/R}\left|r_2(s)-r_2(k/R)\right|\cdot |U_2'(\theta_1(0),\theta_2(k/R))|ds\\
            &\lesssim \E\int_{k/R}^{(k+1)/R}|r_2(s)(\theta_2(s)-\theta_2(k/R))|ds+\E\int_{k/R}^{(k+1)/R}|r_2(s)-r_2(k/R)|ds\label{eq:align1}\\
            &\leq \E\left[\int_{k/R}^{(k+1)/R}|r_2(s)|ds\right]^2+\E\int_{k/R}^{(k+1)/R}|r_2(s)-r_2(k/R)|ds\label{eq:align2}\\
            &=o\left(\frac{1}{R}\right).\label{eq:align3}
        \end{align}
        Here, \eqref{eq:align1} is due to the boundedness of the first and second derivatives of $U$, and \eqref{eq:align2} is due to \eqref{eq:the_system}. The first term in \eqref{eq:align2} is small due to the Cauchy-Schwarz inequality and the boundedness of the second moment of $r_2(t)$ by \eqref{eq:solve_r2} (cf. \eqref{eq:L4normr4}). And the second term in \eqref{eq:align2} can be shown to be small using  arguments similar to those in Lemma~\ref{lem:moments}.} This completes the proof of the lemma.
\end{proof}
\begin{proof}[Proof of Proposition~\ref{prop:expansion-t(R)}]
    Choose ${\beta}$ such that ${\alpha}/2<{\beta}<1/3$.
    Let $c(R)=R^{{\beta}}$ and stopping times $\tau_c$, $\sigma_k$, $k\geq0$, $\sigma_{\tilde n}$ be defined as in \eqref{def:stopping_times}.
    By \eqref{eq:the_system} and \eqref{def:Z}, we have
    \begin{align*}
    Z(t(R))&=\tilde Z-\int_{t(R)\wedge\tau_c}^{t(R)}V_1'(\theta_1(s),\theta_2(s))ds+\int_{t(R)\wedge\tau_c}^{\sigma_{\tilde n}}V_1'(\theta_1(s),\theta_2(s))ds\\
    &=\tilde Z+\chi_{\{\tau_c<t(R)\}}\cdot O(t(R))+O(1/R).
    \end{align*}
    The desired result follows from Lemma~\ref{lem:exit_probability} and Lemma~\ref{lem:expansion-sigma}.
\end{proof}

\subsection{{Moderate time behavior}}
\label{SSModeratetime}
The results of Subsection~\ref{SSShortTime} concern a relatively small time scale and are enough for the purpose of proving Proposition~\ref{prop:outer_martingale_generalized}.
However, let us first prove the following lemma, which gives estimates on transitions of $r_1(t)$ on a larger time scale than what was considered in Subsection~\ref{SSShortTime}. 
It will be used in Section~\ref{sec:exit_time_from_the_origin} to control the time spent by the process $X_t^T$ near the origin.
\begin{lemma}
\label{lem:choose_beta}
Suppose that ${\gamma}\in(0,\frac{2}{3})$.
    For every ${p}>1$, $\kappa>0$, and for all $R$ sufficiently large (depending on ${p}$ and $\kappa$), define $\tau=\inf\{t:|r_1(t)|^{p}=\frac{1}{2}R^{p}\text{ or }2R^{p}\}\wedge\inf\{t:|r_2(t)|=R^{{\gamma}}\}$. If $|r_1(0)|=R$ and $|r_2(0)|\leq R^{{\gamma}}$, then we have
        \begin{align}
            &\E\tau 
 \leq (2^{1/{p}}-1)(1-(\frac{1}{2})^{1/{p}})R^2 + \kappa R^2,\\
            &\Prob(|r_1(\tau)|^{p}=\frac{1}{2}R^{p})\leq \frac{2}{3}.\label{eq4:probability}
        \end{align}
\end{lemma}
\begin{proof}
    Without loss of generality, we assume that $r_1(0)=R$.
    Fix ${\gamma}<{\alpha}<2/3$ and choose $t(R)=R^{{\alpha}}$.
    Define $\eta_k=k\cdot t(R)$ and $\tilde k=\inf\{k:\eta_k\geq\tau\}$. Note that the difference between $\tau$ and $\eta_{\tilde k}$ is bounded deterministically by $t(R)$. So, by the equation of $r_1(t)$, we have
    \begin{equation}
    \label{eq:tau_eta_first}
        \|r_1(\tau)-r_1(\eta_{\tilde k})\|_1\leq\|r_1(\tau)-r_1(\eta_{\tilde k})\|_2\lesssim t(R). 
    \end{equation}
    By \eqref{eq:second_moment} in Corollary~\ref{cor:moments}, using the Markov property, we have that 
    \begin{equation*}
        \E\chi_{\{k<\tilde k\}}[r_1(\eta_{k+1})^2-r_1(\eta_{k})^2]=\E\chi_{\{k<\tilde k\}} t(R)+o(\E\chi_{\{k<\tilde k\}} t(R)), 
    \end{equation*}
    hence
    \begin{equation}
    \label{eq:sec_moment_eta}
        \E[r_1(\eta_{\tilde k})^2-R^2]=\sum_{k=0}^{\infty}\E\chi_{\{k<\tilde k\}}[r_1(\eta_{k+1})^2-r_1(\eta_{k})^2]=\E\eta_{\tilde k}+o(\E\eta_{\tilde k}). 
    \end{equation}
  Similarly, by \eqref{eq:increment_of_first_moment}, we have
    \begin{equation}
    \label{eq:first_moment_moment}
        \E[r_1(\eta_{\tilde k})-R]=o(\E\eta_{\tilde k}/R). 
    \end{equation}
    We compute the difference in the second moment by the Cauchy-Schwarz inequality and \eqref{eq:tau_eta_first}:
    \begin{equation}
    \label{eq:tau_eta_second}
        \E[r_1(\eta_{\tilde k})^2-r_1(\tau)^2]=\E[2r_1(\tau)+(r_1(\eta_{\tilde k})-r_1(\tau))][(r_1(\eta_{\tilde k})-r_1(\tau)]\lesssim Rt(R)=o(R^2).
    \end{equation}
    Then, we obtain
    \begin{equation}
    \label{eq:second_moment_at_tau}
    \begin{aligned}
        \E[r_1(\tau)-R]^2&=\E r_1(\tau)^2-2R\E r_1(\tau)+R^2\\
       &= \E r_1(\eta_{\tilde k})^2+o(R^2)-2R(R+O(t(R))+o(\E\eta_{\tilde k}/R))+R^2\\
       &= \E\eta_{\tilde k}+o(\E\eta_{\tilde k})+o(R^2)\\
       &=   \E\tau+o(\E\tau)+o(R^2),
    \end{aligned}
    \end{equation}where the second line follows from \eqref{eq:tau_eta_second}, \eqref{eq:tau_eta_first}, and \eqref{eq:first_moment_moment}; the third line follows from \eqref{eq:sec_moment_eta}; and the last line follows from the fact that $|\tau-\eta_{\tilde k}|\leq t(R)$.
    By the definition of $\tau$, we see that 
    \begin{equation*}
        \E[r_1(\tau)-R-(2^{1/{p}}-1)R][r_1(\tau)-R+(1-(\frac{1}{2})^{1/{p}})R]\leq0,
    \end{equation*}since the function inside the expectation in non-positive on $[2^{-1/{p}}R,2^{1/{p}}R]$.
    Opening the bracket, we have
    \begin{equation}
        \E[r_1(\tau)-R]^2-\left(2^{1/{p}}+(\frac{1}{2})^{1/{p}}-2\right)R\cdot\E[r_1(\tau)-R]\leq(2^{1/{p}}-1)(1-(\frac{1}{2})^{1/{p}})R^2.\label{eq4:exit}
    \end{equation}
    As follows from \eqref{eq:tau_eta_first}, \eqref{eq:first_moment_moment}, and the fact that $|\eta_{\tilde k}-\tau|\leq t(R)$, 
    \begin{equation}
    \label{eq:first_moment_at_tau}
        |\E[r_1(\tau)-R]|\leq|\E[r_1(\eta_{\tilde k})-R]|+\E|r_1(\tau)-r_1(\eta_{\tilde k})|=o(\E\tau/R)+O(t(R)),
    \end{equation}
    By combining this with \eqref{eq:second_moment_at_tau} and \eqref{eq4:exit}, we obtain, for ${p}>1$, $\kappa>0$ small enough, and all $R$ sufficiently large,
    \begin{equation*}
        \E\tau\leq(2^{1/{p}}-1)(1-(\frac{1}{2})^{1/{p}})R^2+\kappa R^2<R^2.
    \end{equation*}
    Returning to \eqref{eq:first_moment_at_tau}, we obtain that, for each ${p}>1$, 
    \begin{equation*}
        \E r_1(\tau)
        = R+o(R)\geq\left[\frac{1}{3}\cdot2^{1/{p}}+\frac{2}{3}(\frac{1}{2})^{1/{p}}\right]R,
    \end{equation*}for all $R$ sufficiently large. The desired result \eqref{eq4:probability} follows.
\end{proof}

\subsection{{Long time behavior}}
\label{SSLongtime}
\begin{proof}[Proof of Proposition \ref{prop:outer_martingale_generalized}.]
    {As mentioned earlier, we focus on the proof of \eqref{eq:outer_martingale}.}
    We divide the time into smaller intervals and iteratively apply Corollary~\ref{cor:moments}.
    By Proposition~\ref{prop:sup_of_r_2}, with overwhelming probability, $r_2(t)$ stays relatively small hence satisfies the assumption in Corollary~\ref{cor:moments}.

    Let $\sigma=\inf\{s:r_1(s\cdot T)=2t\cdot T\}\wedge\inf\{s:|r_2(s\cdot T)|=3\log T\}$.
    We introduce this stopping time for technical reasons only, and $\{\sigma\leq t\}$ is not likely to happen.
    Fix $t(x)=x^{1/6}$.
    Define $\tau_0=0$ and inductively define $\tau_{k+1}=\tau_{k}+t(r_1(\tau_k))$, and $\hat n=\inf\{k:\tau_k\geq(t\wedge\eta\wedge\sigma)\cdot T\}$. 
    By the definitions, we immediately see that $\tau_{\hat n}\leq t\cdot T+(2t\cdot T)^{1/6}$.
    Before carrying out our plan of applying Corollary~\ref{cor:moments} repeatedly, we first prove a claim that reduces the proof.
    \newline
    \textbf{Claim.} It is enough to prove \eqref{eq:outer_martingale} with $t\wedge\eta$ replaced by $\tau_{\hat n}/T$.\\
    {\em Proof of the claim.}
    By \eqref{eq:the_system}, the assumption that the derivatives of $V$ are bounded by $1$, and Proposition~\ref{prop:sup_of_r_2}, $\Prob(\sigma<t\wedge\eta)\leq\Prob(\sigma<t)\to0$ as $T\to\infty$. Then, it is enough to show \eqref{eq:outer_martingale} with $t\wedge\eta$ replaced by $t\wedge\eta\wedge\sigma$.
    Now it remains to consider the difference induced during the interval $[t\wedge\eta\wedge\sigma,\tau_{\hat n}/T]$ which has length no more  than $t(r_1(\tau_{\hat n-1}))/T=O(T^{-5/6})$. 
    Then, uniformly in $X_0^T$ on $K$ and $|r_2(0)|\leq2\log T$,
    \begin{align*}
        &\E\left[f(X_{\tau_{\hat n}/T}^T)-f(X_{t\wedge\eta\wedge\sigma}^T)-\frac{1}{2}\int_{{t\wedge\eta\wedge\sigma}}^{\tau_{\hat n}/T}f''(X_s^T)ds\right]\\
        &\lesssim \la\E\left[f(X_{\tau_{\hat n}/T}^T)-f(X_{t\wedge\eta\wedge\sigma}^T)\right]\ra+\E|{\tau_{\hat n}/T}-({{t\wedge\eta\wedge\sigma}})|\\
        &\lesssim\E\left[\frac{1}{\sqrt{T}}|r_1(\tau_{\hat n})-r_1((t\wedge\eta\wedge\sigma)T)|\right]+O(T^{-5/6})= O(T^{-1/3})\to0,
    \end{align*}where the last line follows from \eqref{eq:the_system} and the fact that $|\tau_{\hat n}-(t\wedge\eta\wedge\sigma)T|=O(T^{1/6})$. The claim is proved.
    \qed
    
    Now let us prove \eqref{eq:outer_martingale} with $t\wedge\eta$ replaced by $\tau_{\hat n}/T$.
     We divide {the whole time interval into smaller sub-intervals}  using $\tau_k$, $k\geq0$, and obtain
    \begin{align*}
        &\E\left[f(X_{\tau_{\hat n}/T}^T)-f(X_0^T)-\frac{1}{2}\int_0^{\tau_{\hat n}/T}f''(X_s^T)ds\right]\\
        &=\E\left[f(r_1(\tau_{\hat n})/\sqrt{T})-f(r_1(0)/\sqrt{T})-\frac{1}{2T}\int_0^{\tau_{\hat n}}f''(r_1(s)/\sqrt{T})ds\right]\\
        &=\sum_{k=0}^{\infty}\E\lp\chi_{\{k<\hat n\}}\left[f(r_1({\tau_{k+1}})/\sqrt{T})-f(r_1({\tau_{k}})/\sqrt{T})-\frac{1}{2T}\int_{\tau_{k}}^{\tau_{k+1}}f''(r_1(s)/\sqrt{T})ds\right]\rp\\
        &=\sum_{k=0}^{\infty}\E\lp\chi_{\{k<\hat n\}}\E\left[f(r_1({\tau_{k+1}})/\sqrt{T})-f(r_1({\tau_{k}})/\sqrt{T})-\frac{1}{2T}\int_{\tau_{k}}^{\tau_{k+1}}f''(r_1(s)/\sqrt{T})ds\bigg|\mathcal F_{\tau_k}\right]\rp\\
        &=\sum_{k=0}^{\infty}\E\lp\chi_{\{k<\hat n\}}\E\left[\frac{1}{\sqrt{T}}f'(r_1({\tau_{k}})/\sqrt{T})(r_1({\tau_{k+1}})-r_1({\tau_{k}}))\bigg|\mathcal F_{\tau_k}\right]\rp\\
        &\quad+\sum_{k=0}^{\infty}\E\lp\chi_{\{k<\hat n\}}\E\left[\frac{1}{2T}f''(r_1(\tau_k)/\sqrt{T})(r_1({\tau_{k+1}})-r_1({\tau_{k}}))^2-\frac{1}{2T}f''(\tilde\xi_k)(\tau_{k+1}-\tau_k)\bigg|\mathcal F_{\tau_k}\right]\rp\\
        &\quad+\sum_{k=0}^{\infty}\E\lp\chi_{\{k<\hat n\}}\E\left[\frac{1}{6T\sqrt{T}}f'''(\xi_k)(r_1({\tau_{k+1}})-r_1({\tau_{k}}))^3\bigg|\mathcal F_{\tau_k}\right]\rp\\
        &=:\mathcal T_1+\mathcal T_2+\mathcal T_3,
    \end{align*}where $\xi_k$ and $\tilde\xi_k$ are (random) numbers between $r_1(\tau_k)/\sqrt{T}$ and $r_1(\tau_{k+1})/\sqrt{T}$.
    Recall \eqref{eq:increment_of_first_moment} in Corollary~\ref{cor:moments} and the fact that $\chi_{\{k<\hat n\}}r_1(\tau_k)\geq\e\sqrt{T}$.
    Let us deal with the first term $\mathcal T_1$:
    \begin{align*}
        \mathcal T_1&=\sum_{k=0}^{\infty}\E\lp\chi_{\{k<\hat n\}}\E\left[\frac{1}{\sqrt{T}}f'(\frac{r_1(\tau_k)}{\sqrt{T}})(r_1({\tau_{k+1}})-r_1({\tau_{k}}))\bigg|\mathcal F_{\tau_k}\right]\rp\\
        &\lesssim\sum_{k=0}^{\infty}\E\lp\chi_{\{k<\hat n\}}o(\frac{t(r_1(\tau_k))}{\e T})\rp=o(\E\tau_{\hat n}/T).
    \end{align*}
    Now we recall \eqref{eq:increment_of_second_moment} in Corollary~\ref{cor:moments} and deal with the second term $\mathcal T_2$:
    \begin{align*}
        \mathcal T_2&=\sum_{k=0}^{\infty}\E\lp\chi_{\{k<\hat n\}}\E\left[\frac{1}{2T}f''(\frac{r_1(\tau_k)}{\sqrt{T}})(r_1({\tau_{k+1}})-r_1({\tau_{k}}))^2-\frac{1}{2T}f''(\tilde\xi_k)(\tau_{k+1}-\tau_k)\bigg|\mathcal F_{\tau_k}\right]\rp\\
        &=\frac{1}{2T}\sum_{k=0}^{\infty}\E\lp\chi_{\{k<\hat n\}}f''(\frac{r_1(\tau_k)}{\sqrt{T}})\E\left[(r_1({\tau_{k+1}})-r_1({\tau_{k}}))^2-t(r_1(\tau_k))\bigg|\mathcal F_{\tau_k}\right]\rp\\
        &\quad+\frac{1}{2T}\sum_{k=0}^{\infty}\E\lp\chi_{\{k<\hat n\}}t(r_1(\tau_k))\E\left[f''(\frac{r_1(\tau_k)}{\sqrt{T}})-f''(\tilde\xi_k)\bigg|\mathcal F_{\tau_k}\right]\rp\\
        &\lesssim\frac{1}{2T}\sum_{k=0}^{\infty}\E\lp\chi_{\{k<\hat n\}}o(t(r_1(\tau_k)))\rp+\frac{1}{2T}\sum_{k=0}^{\infty}\E\lp\chi_{\{k<\hat n\}}t(r_1(\tau_k))\E\left[\frac{1}{\sqrt{T}}|r_1(\tau_{k+1})-r_1(\tau_k)|\bigg|\mathcal F_{\tau_k}\right]\rp\\
        &=o(\E\tau_{\hat n}/T)
    \end{align*}since $\tilde\xi_k$ is a number between $r_1(\tau_k)/\sqrt{T}$ and $r_1(\tau_{k+1})/\sqrt{T}$, and \eqref{eq:increment_of_second_moment} is used in the penultimate step. The last step is due to the equation for $r_1$ and the fact that $\chi_{\{k<\hat n\}}t(r_1(\tau_k))=O(\sqrt[6]{T})$.
    Similarly, 
    \begin{align*}
        \mathcal T_3&= \sum_{k=0}^{\infty}\E\lp\chi_{\{k<\hat n\}}\E\left[\frac{1}{6T\sqrt{T}}f'''(\xi_k)(r_1({\tau_{k+1}})-r_1({\tau_{k}}))^3\bigg|\mathcal F_{\tau_k}\right]\rp\\
        &\lesssim\sum_{k=0}^{\infty}\E\lp\chi_{\{k<\hat n\}}t(r_1({\tau_{k}}))^3/(T\sqrt{T})\rp\\
        &\lesssim\sum_{k=0}^{\infty}\E\lp\chi_{\{k<\hat n\}}t(r_1({\tau_{k}}))/T\cdot t(r_1({\tau_{k}}))^2/\sqrt{T})\rp=o(\E\tau_{\hat n}/T),
    \end{align*}since $\chi_{\{k<\hat n\}}t(r_1({\tau_{k}}))=O(\sqrt[6]{T})$ for all $k$.
    Thus, the desired result follows from that $\tau_{\hat n}=O(T)$.
\end{proof}

\section{{Excursions to small values of \texorpdfstring{$r_1$}{r1}}}
\label{sec:exit_time_from_the_origin}
{The goal of this section is to control the process when it becomes small. 
The results that will be needed in the sequel are summarized in Propositions \ref{prop:time_near_zero} and \ref{prop:number_excursions}.}

Fix $\alpha_1=6/7$, $\alpha_2=5/9$, and let ${p}$ in Lemma~\ref{lem:choose_beta} be chosen as $3/2$. 
Note that Lemma~\ref{lem:choose_beta} will be used in this section with ${\gamma}=\alpha_2/\alpha_1<2/3$.
Let $\tau(x)=\inf\{t:|r_1(t)|=x\}$.
To start with, we obtain the upper bound on the time spent by $r_1(t)$ near $0$. More precisely, we consider the time $|r_1(t)|$ spends on the interval $[0,\e\sqrt{T}]$ before exiting, where $\e$ is a small parameter that will be specified later. For time spent below $|\log T|^{\alpha_1}$, we use the boundedness of the drift term to get a crude estimate. Once $|r_1(t)|$ gets large enough, we can apply the result in Lemma~\ref{lem:choose_beta}.
\begin{proposition}
    \label{prop:time_near_zero}
    Let $\zeta=\inf\{t:|r_2(t)|=|\log T|^{\alpha_2}\}$.
    For each $\e>0$ small enough, if $|r_1(0)|\leq\e\sqrt{T}$, $|r_2(0)|\leq|\log T|^{\alpha_2}$, then $\E(\tau(\e\sqrt{T})\wedge\zeta)\lesssim \e^2T$, as $T\to\infty$.
\end{proposition}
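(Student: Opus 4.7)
My plan is to combine an iterative application of Lemma~\ref{lem:choose_beta} (to cover most of the excursion, when $|r_1|$ is not too small) with Lemma~\ref{lem:time_close_0} (to bring $|r_1|$ back above $|\log T|^{\alpha_1}$ quickly after it dips below). I will use $\beta=3/2$ and $\alpha=\alpha_2/\alpha_1=35/54<2/3$ in Lemma~\ref{lem:choose_beta}. The hypothesis $|r_2|\leq R^\alpha$ of that lemma is automatic whenever $R\geq|\log T|^{\alpha_1}$ and $\zeta$ has not yet occurred, since then $R^\alpha\geq|\log T|^{\alpha_2}\geq|r_2|$.

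Concretely, I would set $\tau_0=0$, $R_k:=|r_1(\tau_k)|$, and let $\tau_{k+1}$ be the stopping time produced by Lemma~\ref{lem:choose_beta} starting at time $\tau_k$ with $R=R_k$ when $R_k\geq|\log T|^{\alpha_1}$, and otherwise let $\tau_{k+1}$ be the first time $|r_1|$ returns to $|\log T|^{\alpha_1}$ (controlled by Lemma~\ref{lem:time_close_0}); all times are truncated by $T^*:=\tau(\e\sqrt T)\wedge\zeta$. I would then view the discrete-level process $L_k:=\lceil\log_{2^{2/3}}(R_k/|\log T|^{\alpha_1})\rceil$ as a random walk on $\{0,\ldots,M\}$ with $M=\lceil\log_{2^{2/3}}(\e\sqrt T/|\log T|^{\alpha_1})\rceil=O(\log T)$, for which Lemma~\ref{lem:choose_beta} guarantees that the one-step down-probability is at most $2/3$. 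Since $R_k^{3/2}$ is then a (worst-case) martingale, a gambler's-ruin style argument yields the visit-count bound $\E N_L\leq 3\min(1,2^{L_0-L})$ for the expected number of visits to level $L$ from the starting level $L_0$.

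Each Lemma~\ref{lem:choose_beta} step at level $L$ contributes at most $CR_k^2=C|\log T|^{2\alpha_1}\cdot 2^{4L/3}$ to $\E T^*$, and each Lemma~\ref{lem:time_close_0} excursion (expected $O(1)$ in number) contributes $o(T^{1/10})$. Summing, $\E T^*\lesssim|\log T|^{2\alpha_1}\bigl[\sum_{L\leq L_0}2^{4L/3}+2^{L_0}\sum_{L_0<L\leq M}2^{L/3}\bigr]+o(T^{1/10})$. The first geometric sum evaluates to $R_0^2\leq\e^2T$, and using $2^{L_0}=(R_0/|\log T|^{\alpha_1})^{3/2}$ together with $2^{M/3}=(\e\sqrt T/|\log T|^{\alpha_1})^{1/2}$, the second reduces to $R_0^{3/2}(\e\sqrt T)^{1/2}\leq\e^2T$ (using $R_0\leq\e\sqrt T$). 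This yields $\E T^*\lesssim\e^2T$ as claimed.

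The hard part will be justifying the visit-count estimate uniformly in the (unknown) transition probabilities of $L_k$. Lemma~\ref{lem:choose_beta} only controls the down-probability, with no explicit lower bound on the up-probability since some mass can be absorbed via $|r_2|$ reaching $R_k^\alpha$. One must verify that such absorption events during a Lemma~\ref{lem:choose_beta} step imply that $\zeta$ has already occurred (so the iteration has effectively terminated), and handle the reflection of $L_k$ at level $0$ through Lemma~\ref{lem:time_close_0}, confirming that the total contribution from low-level excursions is indeed $o(\e^2T)$.
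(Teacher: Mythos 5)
Your plan mirrors the paper's: a dyadic decomposition of $[|\log T|^{\alpha_1},\e\sqrt T]$ on the $R^\beta$-scale, with Lemma~\ref{lem:choose_beta} controlling each level and Lemma~\ref{lem:time_close_0} handling the interval below $|\log T|^{\alpha_1}$. The gap is the visit-count estimate $\E N_L\leq 3\min(1,2^{L_0-L})$. That formula would hold for a walk on $\mathbb Z$ for which $2^{L_k}$ is a (worst-case) martingale \emph{and} there is an escape route to $-\infty$ (equivalently, a walk absorbed at level $0$). Your walk is instead \emph{reflected} at level $0$ (by Lemma~\ref{lem:time_close_0}) and can only terminate by exiting at level $M$ or via $\zeta$; for such a walk with down-probability up to $2/3$, the correct expected number of visits to level $L$ is the reciprocal of the gambler's-ruin escape probability from $L$ to $M$, namely $\E N_L\approx 3\cdot 2^{M-L}$, independent of $L_0$. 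In particular, the walk visits the bottom levels exponentially many times, and the Lemma~\ref{lem:time_close_0} excursions are entered $\approx 2^M$ times, not $O(1)$. Your formula undercounts by a factor of roughly $2^{M}$ when $L_0$ is small: with $R_0\approx|\log T|^{\alpha_1}$, the true expected exit time is of order $\e^2T$ (as for reflected Brownian motion on $[0,\e\sqrt T]$), whereas your estimate for the $L\leq L_0$ contribution would give only $O(|\log T|^{2\alpha_1})$.

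The paper avoids this by computing, from each level $k$, the probability that a single ``attempt'' of duration $K\cdot 2^{2k/\beta}$ either terminates via $\zeta$ or reaches $\e\sqrt T$ without returning to level $k$; that probability is $\gtrsim 2^{k}/(\e\sqrt T)^\beta$ by \eqref{eq:goright} together with the gambler's-ruin estimate \eqref{eq4:gambler}, so the expected number of attempts at level $k$ is $\lesssim(\e\sqrt T)^\beta/2^{k}$. Summing $2^{2k/\beta}\cdot(\e\sqrt T)^\beta/2^{k}$ over $k$ gives a geometric series dominated by the top level and totalling $O(\e^2T)$, and the contribution of the bottom excursions is $O(\sqrt[9]{T}\cdot(\e\sqrt T)^\beta)=o(T)$. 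Your scheme becomes correct once the visit count is replaced by $\E N_L\lesssim 2^{M-L}$, with the sum then dominated by the top levels rather than by $L_0$. Finally, your concern about $|r_2|$ reaching $R^\alpha$ inside a Lemma~\ref{lem:choose_beta} step is handled automatically: since $R^\alpha=R^{\alpha_2/\alpha_1}\geq|\log T|^{\alpha_2}$ whenever $R\geq|\log T|^{\alpha_1}$, such an event implies $\zeta$ has already occurred and only shortens the excursion.
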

Recall that $X_t^T=r_1(t\cdot T)/\sqrt{T}$.
Then, the next task is to prove the following result that will eventually be helpful in showing that the number of excursions from $2\e$ to $\e$ of this process is not too large (see \eqref{eq6:n_excursions}).
\begin{proposition}
\label{prop:number_excursions}
    Assume that $|X_0^T|=2\e$ and $|r_2(0)|<\log T$. Let $\eta=\inf\{t:|X_t^T|=\e\}$. Then there exists $\kappa>0$ such that, for each $\e$ sufficiently small,
    \begin{equation*}
        \E e^{-\eta}\leq1-\kappa\e,
    \end{equation*}
    for all $T$ sufficiently large. 
\end{proposition}
\subsection{Length of one excursion: Proof of Proposition~\ref{prop:time_near_zero}}

\begin{lemma}
    \label{lem:time_close_0}
    As $T\to\infty$, $\E\tau(|\log T|^{\alpha_1})=o(\sqrt[10]{T})$, uniformly in $|r_1(0)|\leq|\log T|^{\alpha_1}$ and $r_2(0)\in\mathbb R$.
\end{lemma}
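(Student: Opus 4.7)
The plan is to exploit the fact that, in \eqref{eq:the_system}, the drift of $r_1$ is uniformly bounded in absolute value by $1$, independently of $r_2$, $\theta_1$, $\theta_2$. Thus $r_1$ is a one-dimensional diffusion with unit diffusion coefficient and a bounded adapted drift, and its exit time from any symmetric interval $[-L,L]$ can be controlled by a Lyapunov function argument that does not see the other coordinates. Since the resulting bound will depend only on $L$, the required uniformity in $r_2(0)$ and in $|r_1(0)|\leq L$ is automatic.

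Set $L=|\log T|^{\alpha_1}$ and take the Lyapunov function $\phi(x)=\cosh(Cx)$ with $C=4$. By It\^o's formula applied to $\phi(r_1(t))$, and using $|V'|\leq 1$, $|\sinh|\leq\cosh$, and $\cosh\geq 1$, the drift part of $d\phi(r_1)$ satisfies
$$
\left[-V'(\theta_1-\theta_2)\,C\sinh(C r_1)+\tfrac{C^2}{2}\cosh(C r_1)\right]dt \;\geq\; (C^2/2-C)\cosh(C r_1)\,dt \;\geq\; 4\,dt,
$$
so $\phi(r_1(t))-4t$ is a local submartingale.

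Applying optional stopping at the bounded stopping time $\tau(L)\wedge N$ (the stochastic integral part has zero mean, since $\phi'(r_1)=C\sinh(Cr_1)$ is bounded by $C\sinh(CL)$ on that interval), we get
$$
4\,\E\bigl[\tau(L)\wedge N\bigr]\;\leq\; \E\phi\bigl(r_1(\tau(L)\wedge N)\bigr)-\phi(r_1(0))\;\leq\; \cosh(CL)-1.
$$
Letting $N\to\infty$ by monotone convergence gives $\E\tau(L)\leq(\cosh(4L)-1)/4\leq e^{4L}/4$.

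Finally, substituting $L=|\log T|^{6/7}$ yields $\E\tau(L)\leq e^{4(\log T)^{6/7}}/4$. Since $(\log T)^{6/7}/\log T\to 0$, the right-hand side is $T^{o(1)}$, hence $o(T^{1/10})$, proving the lemma. There is no serious obstacle: the main design choice is picking an exponential rather than a polynomial Lyapunov function, since an inward-pointing unit drift can in principle trap the process for a time exponential in $L$; however, this exponential cost is harmless here because $L$ is only polylogarithmic in $T$.
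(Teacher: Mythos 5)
Your argument is correct and is essentially the same as the paper's: both exploit that the drift of $r_1$ is bounded by $1$ uniformly in $r_2,\theta_1,\theta_2$ and then bound $\E\tau(L)$ by a Lyapunov/comparison argument that yields an $e^{cL}$ estimate, which is $T^{o(1)}$ because $L=(\log T)^{6/7}$ is sub-linear in $\log T$. The paper takes the exact solution $u$ of $\tfrac12 u''-u'=-1$ with $u'(0)=0$, $u(L)=0$ as the comparison function, giving $\E\tau(L)\le u(0)\sim\tfrac12 e^{2L}$, whereas you substitute the ready-made Lyapunov function $\cosh(4x)$ and write out the optional-stopping step more explicitly; the resulting constants differ but the mechanism is identical.
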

\begin{proof}
    Since the drift term in the equation for $r_1(t)$ in \eqref{eq:the_system} is bounded by $1$ in absolute value, $\E(\tau(|\log T|^{\alpha_1}))\leq u(0)$, where $u(x)=-\frac{1}{2}e^{2x}+x+\frac{1}{2}e^{2|\log T|^{\alpha_1}}-|\log T|^{\alpha_1}$ is the solution to
    \begin{equation*}
        \begin{cases}
            \frac{1}{2}u''-u'=-1\\
            u'(0)=u(|\log T|^{\alpha_1})=0
        \end{cases}.
    \end{equation*}
    It remains to see that $\DS
        u(0)=-\frac{1}{2}+\frac{1}{2}e^{2|\log T|^{\alpha_1}}-|\log T|^{\alpha_1}=o(\sqrt[10]{T}).$
\end{proof}
\begin{proof}[Proof of Proposition~\ref{prop:time_near_zero}]
For all $k\in\mathbb N$ such that $\lfloor{p}\log_2(|\log T|^{\alpha_1})\rfloor-1\leq k\leq\lfloor{p}\log_2(\e\sqrt{T})\rfloor$, let 
    $$t_k=\E(\text{time spent by $|r_1(t)|$ in }[2^{k/{p}},2^{(k+1)/{p}}]\text{ before }\tau(\e\sqrt T)\wedge\zeta).$$
    \textbf{Claim.} For $\lfloor{p}\log_2(|\log T|^{\alpha_1})\rfloor-1\leq k\leq\lfloor{p}\log_2(\e\sqrt{T})\rfloor$, $t_k\lesssim 2^{2k/{p}-k}(\e\sqrt{T})^{p}$.
    \newline\textit{Proof of the claim.}
    {Be aware that, in the steps below, we use the same notation $\Prob$ and $\E$ in the formulas, however with different initial conditions, specified before the formulas.}
    Note that, since $k\geq \lfloor{p}\log_2(|\log T|^{\alpha_1})\rfloor-1$ and ${\gamma}=\alpha_2/\alpha_1<2/3$, the assumptions in Lemma~\ref{lem:choose_beta} are satisfied if $|r_1(0)|\in[2^{k/{p}},2^{(k+1)/{p}}]$ and $|r_2(0)|\leq|\log T|^{\alpha_2}$.
    Then, using the strong Markov property, one can obtain
    \begin{equation}
    \label{eq5:time}
        \E\tau(2^{(k-1)/{p}})\wedge\tau(2^{(k+2)/{p}})\wedge\zeta\leq 20\cdot 2^{2k/{p}}.
    \end{equation}
    
    Let $K>0$ be specified later, ${\tilde\tau}_1=\inf\{t>K\cdot 2^{2k/{p}}:|r_1(t)|\in[2^{k/{p}},2^{(k+1)/{p}}]\}$, and ${\tilde\tau}_{j+1}=\inf\{t>K\cdot  2^{2k/{p}}+{\tilde\tau}_j:|r_1(t)|\in[2^{k/{p}},2^{(k+1)/{p}}]\}$. 
    Here the idea is that the process $|r_1(t)|$ starting on $[2^{k/{p}},2^{(k+1)/{p}}]$ can make an attempt to reach $2^{(k+2)/{p}}$ within time $K\cdot  2^{2k/{p}}$, and then reaches $\e\sqrt{T}$ before coming back to the interval $[2^{k/{p}},2^{(k+1)/{p}}]$. 
    The stopping times ${\tilde\tau}_j$, $j\geq1$, are used to describe the ``failed'' attempts.
    {Take $K=200$. Then}
    uniformly in $|r_1(0)|\in[2^{k/{p}},2^{(k+1)/{p}}]$,
    \begin{equation}
    \label{eq:goright}
        \begin{aligned}
            &\Prob(\tau(2^{(k+2)/{p}})\wedge\zeta<K\cdot 2^{2k/{p}})\\
            &\geq \Prob(\tau(2^{(k-1)/{p}})\wedge\tau(2^{(k+2)/{p}})\wedge\zeta<K\cdot 2^{2k/{p}},\tau(2^{(k+2)/{p}})\wedge\zeta<\tau(2^{(k-1)/{p}}))\\
            &\geq \Prob(\tau(2^{(k+2)/{p}})\wedge\zeta<\tau(2^{(k-1)/{p}}))-\Prob(\tau(2^{(k-1)/{p}})\wedge\tau(2^{(k+2)/{p}})\wedge\zeta\geq K\cdot 2^{2k/{p}})\\ 
            &\geq \frac{1}{3}\times\frac{1}{3}-\frac{1}{10}
            \geq \frac{1}{100},
        \end{aligned}
    \end{equation} 
    {by  Lemma~\ref{lem:choose_beta} and
\eqref{eq5:time}}. 
    Note that, by \eqref{eq4:probability}, for $|r_1(0)|=2^{(k+2)/{p}}$ and $|r_2(0)|\leq|\log T|^{\alpha_2}$, $\Prob(\tau(\e\sqrt T)\wedge\zeta<\tau(2^{(k+1)/{p}}))$ has a lower bound.
    {Namely, 
    \begin{equation}
    \label{eq4:gambler}
        \Prob(\tau(\e\sqrt T)\wedge\zeta<\tau(2^{(k+1)/{p}}))\geq\frac{2-1}{2^{\lfloor{p}\log_2(\e\sqrt{T})\rfloor-(k+1)}-1}\cdot\frac{1}{3}\geq\frac{2^{k+1}}{(\e\sqrt{T})^{p}}\cdot\frac{1}{3}.
    \end{equation}}
    Then, for each $|r_1(0)|\in[2^{k/{p}},2^{(k+1)/{p}}]$,
        \begin{align*}
            &\Prob({\tilde\tau}_1>\tau(\e\sqrt T)\wedge\zeta)\\
            &\geq\Prob(\tau(2^{(k+2)/{p}})<\zeta,\tau(2^{(k+2)/{p}})<K\cdot 2^{2k/{p}},\tau(\e\sqrt{T})\wedge\zeta<\tilde\tau_1)\\
            &\quad+\Prob(\zeta\leq\tau(2^{(k+2)/{p}}),\zeta<K\cdot 2^{2k/{p}})\\
            &\geq \Prob(\tau(2^{(k+2)/{p}})<\zeta,\tau(2^{(k+2)/{p}})<K\cdot 2^{2k/{p}})\cdot\inf_{\substack{|r_1(0)|=2^{(k+2)/{p}}\\ |r_2(0)|\leq|\log T|^{\alpha_2}}}\Prob(\tau(\e\sqrt T)\wedge\zeta<\tau(2^{(k+1)/{p}}))\\
            &\quad+\Prob(\zeta\leq\tau(2^{(k+2)/{p}}),\zeta<K\cdot 2^{2k/{p}})\cdot 1\\
            &\geq\Prob(\tau(2^{(k+2)/{p}})\wedge\zeta<K\cdot 2^{2k/{p}})\cdot\inf_{\substack{|r_1(0)|=2^{(k+2)/{p}}\\ |r_2(0)|\leq|\log T|^{\alpha_2}}}\Prob(\tau(\e\sqrt T)\wedge\zeta<\tau(2^{(k+1)/{p}}))\\
        &\geq \frac{1}{100}\cdot\frac{2^{k+1}}{(\e\sqrt{T})^{p}}\cdot\frac{1}{3},
        \end{align*}
        where the first inequality holds since the events on the right-hand side are disjoint subsets of the event on the left-hand side; the second inequality holds by the strong Markov property applied at $\tau(2^{(k+2)/{p}})$, and the last inequality follows from the strong Markov property applied at $\tau(2^{(k+2)/{p}})\wedge\zeta$, \eqref{eq:goright}, and \eqref{eq4:gambler}. 
    Thus, for all $k\in\mathbb N$ such that $\lfloor{p}\log_2(|\log T|^{\alpha_1})\rfloor-1\leq k\leq\lfloor{p}\log_2(\e\sqrt{T})\rfloor$,
        \begin{align*}
            t_k&\leq K\cdot 2^{2k/{p}}\sum_{j=1}^\infty\Prob({\tilde\tau}_j<\tau(\e\sqrt T)\wedge\zeta)\\
        &\lesssim 2^{2k/{p}}\sum_{j=1}^\infty\sup_{\substack{|r_1(0)|\in[2^{k/{p}},2^{(k+1)/{p}}]\\|r_2(0)|\leq\log T^{\alpha_2}}}\Prob({\tilde\tau}_1<\tau(\e\sqrt T)\wedge\zeta)^j\lesssim 2^{2k/{p}-k}(\e\sqrt{T})^{p},
        \end{align*}which proves the claim.\qed
        
    Let us continue with the proof of Proposition~\ref{prop:time_near_zero}. Denote $\hat k = \lfloor{p}\log_2(|\log T|^{\alpha_1})\rfloor-1$. 
    Recall that ${p}=3/2<2$. It follows that
    \begin{equation*}
        \sum_{k=\hat k}^{\lfloor{p}\log_2(\e\sqrt{T})\rfloor}t_k\lesssim\e^2 T.
    \end{equation*}
    It remains to consider $\DS\hat t=\E(\text{time spent by $|r_1(t)|$ in }[0,2^{\hat k/{p}}]\text{ before }\tau(\e\sqrt{T})\wedge\zeta).$
    
    Similarly to the way it was done  before, we define following stopping times: 
\[ {\hat\tau}_1=\inf\{t>\sqrt[9]{T}:|r_1(t)|\in[0,2^{\hat k/{p}}]\}, 
\quad\text{and}\quad {\hat\tau}_{j+1}=\inf\{t>\sqrt[9]{T}+{\hat\tau}_j:|r_1(t)|\in[0,2^{\hat k/{p}}]\}.\]
    By Lemma~\ref{lem:time_close_0}, for each $|r_1(0)|\in[0,2^{\hat k/{p}}]$, 
    we have 
    \begin{equation}
    \label{eq4:short-time-exit}
        \Prob(\tau(2^{(\hat k+1)/{p}})<\sqrt[9]{T})\geq\frac{1}{2}.
    \end{equation}
    {As in \eqref{eq4:gambler}, by \eqref{eq4:probability}, for $|r_1(0)|=2^{(\hat k +1)/{p}}$ and $|r_2(0)|\leq|\log T|^{\alpha_2}$, 
    \begin{equation}
    \label{eq4:gambler2}
        \Prob(\tau(\e\sqrt T)\wedge\zeta<\tau(2^{\hat k/{p}}))\geq\frac{2-1}{2^{\lfloor{p}\log_2(\e\sqrt{T})\rfloor-\hat k}-1}\cdot\frac{1}{3}\geq\frac{2^{\hat k}}{(\e\sqrt{T})^{p}}\cdot\frac{1}{3}.
    \end{equation}
    By \eqref{eq4:short-time-exit}, \eqref{eq4:gambler2}, and the strong Markov property applied at $\tau(2^{(\hat k+1)/{p}})\wedge\zeta$, for each $|r_1(0)|\in[0,2^{\hat k/{p}}]$,}
    \begin{align*}
        &\Prob(\hat\tau_1>\tau(\e \sqrt T)\wedge\zeta)\\
        &\geq\Prob\left(\zeta<\sqrt[9]{T},\zeta\leq\tau(2^{(\hat k +1)/{p}})\right)+\Prob\left(\tau(2^{(\hat k +1)/{p}})<\sqrt[9]{T},\tau(2^{(\hat k +1)/{p}})<\zeta,\tau(\e\sqrt{T})\wedge\zeta<\hat\tau_1\right)\\
        &\geq\Prob(\tau(2^{(\hat k+1)/{p}})\wedge\zeta<\sqrt[9]{T})\cdot\inf_{\substack{|r_1(0)|=2^{(\hat k+1)/{p}}\\ |r_2(0)|\leq|\log T|^{\alpha_2}}}\Prob(\tau(\e\sqrt T)\wedge\zeta<\tau(2^{\hat k/{p}}))\\
        &\geq\frac{1}{2}\cdot\frac{2^{\hat k}}{(\e\sqrt{T})^{p}}\cdot\frac{1}{3}.
    \end{align*}
    Thus, by the strong Markov property, $\DS
        \hat t\leq\sqrt[9]{T}\sum_{j=1}^\infty(1-\frac{1}{6}\frac{2^{\hat k}}{(\e\sqrt{T})^{p}})^j\lesssim\sqrt[9]{T}(\e\sqrt{T})^{p}=o(T).$
\end{proof}

\subsection{Number of excursions: Proof of Proposition~\ref{prop:number_excursions}}
We control the number of excursions of $|X_t^T|$ from $2\e$ to $\e$. Proposition~\ref{prop:number_excursions} follows from Lemma~\ref{lem5:number_excursions} by choosing the constant $\kappa=(1-1/\sqrt{e})/5$ .
\begin{lemma}
\label{lem5:number_excursions}
    Assume that $|X_0^T|=2\e$ and $|r_2(0)|<\log T$. Let $\eta=\inf\{t:|X_t^T|=\e\}$. 
  Then, for each $\e$ sufficiently small,
   $\Prob(\eta\geq 1/2)\geq \e/5$ for all $T$ sufficiently large.
\end{lemma}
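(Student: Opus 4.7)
The plan is to apply Lemma~\ref{lem:stoped_outer_martingale} twice in succession, combined with a gambler-ruin-type argument and the strong Markov property. Fix $L=4$ and let $\tau_L=\inf\{s:X_s^T=L\}$. Partition the sample space into three disjoint events:
\[
(a)\ \eta\leq \tau_L\wedge \tfrac12,\qquad (b)\ \tau_L<\eta\wedge \tfrac12,\qquad (c)\ \tfrac12\leq \eta\wedge \tau_L.
\]

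First I would apply Lemma~\ref{lem:stoped_outer_martingale} with $\sigma=\tau_L\wedge \tfrac12$ and a test function $f\in C_b^3([0,\infty))$ chosen so that $f(x)=x$ on $[0,L+1]$. Since $X_s^T\in [\e,L]$ for every $s\leq \sigma\wedge\eta$, we have $f''(X_s^T)=0$ on this range, so $\E f(X_{\sigma\wedge\eta}^T)\to f(2\e)=2\e$ as $T\to\infty$. Evaluating $f$ case by case ($f=\e$ on $(a)$, $f=L$ on $(b)$, $f=X_{1/2}^T\leq L$ on $(c)$) gives
\[
2\e + o(1) \leq \e\Prob(a)+L\Prob(b)+L\Prob(c)=\e+(L-\e)(\Prob(b)+\Prob(c)),
\]
so $\Prob(b)+\Prob(c)\geq \e/(L-\e)-o(1)$.

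Next, on the event $(b)$, I would use the strong Markov property of the full system $(r_1,r_2,\theta_1,\theta_2)$ at time $\tau_L T$. By Proposition~\ref{prop:sup_of_r_2} with $D(T)=\log T$, the event $\{|r_2(\tau_LT)|\leq 2\log T\}$ has probability $1-o(1)$. On this good event, applying Lemma~\ref{lem:stoped_outer_martingale} to the restart with $\sigma=\tfrac12$ and a $C_b^3$ approximation of $(x-L)^2$ yields $\E(X^T_{1/2\wedge\eta'}-L)^2\to \E[1/2\wedge \eta']\leq 1/2$, where $\eta'$ denotes the hitting time of $\e$ for the restarted process. Since $|X^T_{1/2\wedge\eta'}-L|=L-\e$ on $\{\eta'<1/2\}$, Markov's inequality gives $\Prob(\eta'<1/2)\leq (1/2+o(1))/(L-\e)^2$. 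For $L=4$ this is at most $1/32+o(1)$, and because a shorter survival interval is easier than a longer one, $\Prob(\eta\geq 1/2\mid b)\geq \Prob(\eta'\geq 1/2)\geq 31/32-o(1)$. Combining,
\[
\Prob(\eta\geq 1/2)=\Prob(c)+\Prob(b)\cdot\Prob(\eta\geq 1/2\mid b)\geq (31/32-o(1))(\Prob(b)+\Prob(c))\geq \frac{31\e}{128}-o(1),
\]
which exceeds $\e/5=25.6\e/128$ for $\e$ sufficiently small and $T$ sufficiently large.

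The main obstacle lies in the restart step: Lemma~\ref{lem:stoped_outer_martingale} requires the initial $|r_2|$ to be bounded by $2\log T$, a condition that must be preserved across the random stopping time $\tau_LT\leq T/2$. This is handled by Proposition~\ref{prop:sup_of_r_2}, whose failure contributes only $o(1)$ and is absorbed in the error. A minor technical point is that the remaining time $\tfrac12-\tau_L$ after restart is random; since the survival probability for the restarted process is monotone decreasing in the time horizon, it suffices to use the bound over the full interval $[0,\tfrac12]$, which is what Lemma~\ref{lem:stoped_outer_martingale} supplies directly.
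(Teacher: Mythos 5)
Your approach is structurally the same as the paper's: two successive applications of Lemma~\ref{lem:stoped_outer_martingale}, first a linear test function to show the process reaches a fixed level before $\e$ or time $1/2$ with probability of order $\e$, then a quadratic one to show that from that level the process survives to time $1/2$ with probability bounded below by a constant, glued together by the strong Markov property. The paper uses levels $1$ and $2$ and the \emph{downward} parabola $f(x)=-(x-\e)(x-2)$ on $[\e,2]$ together with the spatial stopping $\sigma=\inf\{s:X_s^T\in\{\e,2\}\}$; you use level $L=4$ and an \emph{upward} parabola $(x-L)^2$ with no spatial cap. The numbers come out differently ($1/4$ vs $31/32$ for the survival step, compensated by $\e$ vs $\e/(L-\e)$ for the reaching step), but both exceed $\e/5$.

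There is one place where your write-up glosses over a real technical point. You write that a $C_b^3$ approximation of $(x-L)^2$ ``yields $\E(X^T_{1/2\wedge\eta'}-L)^2\to\E[1/2\wedge\eta']$''. This cannot be literally correct: $(x-L)^2$ is unbounded so is not an admissible test function, and for any bounded approximation $f$, $\E f(X^T_{1/2\wedge\eta'})$ is the quantity controlled by Lemma~\ref{lem:stoped_outer_martingale}, not $\E(X^T_{1/2\wedge\eta'}-L)^2$; moreover, since $X^T$ is not stopped from above, the process visits the region where $f\ne(x-L)^2$ with probability bounded away from zero. What actually saves the argument is that one can choose $f$ bounded, $C^3$ with bounded derivatives, with $f\ge0$, $f(L)=0$, $f(\e)=(L-\e)^2$, and crucially $f''\le 2$ \emph{everywhere} (take $f(x)=(x-L)^2$ on a large interval around $L$ and saturate smoothly, which forces $f''$ to dip below zero, which is allowed). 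Then $\E f(X^T_{1/2\wedge\eta'})\le\E[1/2\wedge\eta']+o(1)\le 1/2+o(1)$ from $f''\le2$, and $(L-\e)^2\Prob(\eta'<1/2)\le\E f(X^T_{1/2\wedge\eta'})$ from $f\ge0$, giving the bound you need — but the one-sided inequality $f''\le2$ is essential and is exactly the constraint a naive ``$C_b^3$ approximation'' would violate. This is the same difficulty the paper sidesteps by capping $X^T$ at level $2$, so that its test function only needs to be specified on $[\e,2]$. Your version is a genuine alternative, but the construction of the test function must be made explicit to be airtight; as written, the intermediate limit claim is incorrect even though the final probability bound is recoverable.

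One further small remark: your first step's decomposition into events $(a)$, $(b)$, $(c)$ and the identity $\Prob(\eta\ge1/2)=\Prob(c)+\Prob(b,\eta\ge1/2)$ is correct, and your handling of the good event $\{|r_2(\tau_LT)|\le2\log T\}$ via Proposition~\ref{prop:sup_of_r_2} is a legitimate substitute for the paper's device of building the stopping at $\zeta_T$ directly into $\sigma'$.
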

\begin{proof}[Proof of Lemma \ref{lem5:number_excursions}]
We prove the result in the case where $X_0^T>0$, and the other case can be proved similarly.
{We first prove two claims that will be used with the strong Markov property later.}
\newline
\textbf{Claim 1.} Suppose that $r_1(0)=\sqrt{T}$ and $|r_2(0)|\leq2\log T$.
Let $\sigma=\inf\{s:X_s^T=\e\text{ or }2\}.$ Then, for each $\e$ sufficiently small, $\Prob(\sigma>1/2)>1/4$ for all $T$ sufficiently large.\\
{\em Proof of the claim.} Let $f_\e(x)=-x^2+(2+\e)x-2\e$ on $[\e,2]$. {Note that $f_\e(\e)=f_\e(2)=0$ and $f_\e''=-2$}. Hence, Proposition~\ref{prop:outer_martingale_generalized} gives that
    \begin{equation}
    \label{eq5:convergence1}
        \E\left[f_\e(X_{\sigma\wedge\frac{1}{2}}^T)-f_\e(1)-\frac{1}{2}\int_0^{\sigma\wedge\frac{1}{2}}f_\e''(X_s^T)ds\right]\to0,
    \end{equation}as $T\to\infty$.
    Then, it follows that, for $\e<1/8$ and $T$ sufficiently large so that the left-hand side of \eqref{eq5:convergence1} is less than $1/8$ in absolute value,
    \begin{equation*}
        \E f_\e(X_{\sigma\wedge\frac{1}{2}}^T)\geq f_\e(1)-\E\lp\sigma\wedge\frac{1}{2}\rp-{\frac{1}{8}}\geq 1-\e-\frac{1}{2}-\frac{1}{8}\geq \frac{1}{4}.
    \end{equation*}Since $f_\e(X_\sigma^T)=0$, $\DS
        \Prob(\eta\geq\frac{1}{2})\geq\Prob(\sigma\geq\frac{1}{2})\geq\E f_\e(X_{\sigma\wedge\frac{1}{2}}^T)/\sup_{[\e,2]}f_\e\geq\frac{1}{(2-\e)^2}>\frac{1}{4}.$\qed
\newline
\textbf{Claim 2.} {Suppose that $X_0^T=2\e$ with $0<\e<1$ and $|r_2(0)|<\log T$. Let $\zeta_T=\inf\{s:|r_2(s\cdot T)|=2\log T\}$ and $\sigma'=\inf\{t:X_t^T=1\}\wedge\zeta_T\wedge\frac{1}{2}$. Then, for sufficiently large $T$, $\Prob(\sigma'<\eta)\geq  \e.$}\\
{\em Proof of the claim.} 
    Let $\DS g_\e(x)=\frac{x-\e}{1-\e}$ on $[\e,1]$. By Proposition~\ref{prop:outer_martingale_generalized}, $\DS
        \E\left[g_\e(X_{\sigma'\wedge\eta}^T)-g_\e(X_0^T)\right]\to0$, as $T\to\infty$.
    So, for $T$ sufficiently large, $\DS
        \E g_\e(X_{\sigma'\wedge\eta}^T)\geq g_\e(2\e)-\e^2/(1-\e)=\e.$
    Since $g_\e(\e)=0$ it follows that
   $\DS \Prob(\sigma'<\eta)\geq\E g(X_{\sigma'\wedge\eta}^T)/\sup_{[\e,1]} g_\e\geq\e. $ \qed
        
    We can identify two disjoint subsets of $\{\eta\geq\frac{1}{2}\}$: (i). $\sigma'=1/2<\eta$; (ii). $X_t^T$ reaches $1$ before $\eta\wedge\zeta_T\wedge1/2$ (i.e. $\sigma'=\inf\{t:X_t^T=1\}<\eta\wedge\zeta_T\wedge1/2$), and then spends more time than $1/2$ before reaching $\e$. We obtain by the strong Markov property, for all $T$ sufficiently large,
    \begin{equation*}
    \begin{aligned}
        \Prob\lp\eta\geq\frac{1}{2}\rp&\geq\Prob(\sigma'=\frac{1}{2},\sigma'<\eta)+\Prob(\sigma'<\zeta_T,\sigma'<\frac{1}{2},\eta\geq\frac{1}{2})\\
        &\geq\Prob(\sigma'=\frac{1}{2},\sigma'<\eta)\cdot 1+\Prob(\sigma'<\zeta_T,\sigma'<\frac{1}{2},\sigma'<\eta)\cdot\inf_{r_1(0)=\sqrt{T},|r_2(0)|<2\log T}\Prob(\eta\geq\frac{1}{2})\\
        &\geq\frac{1}{4}(\Prob(\sigma'<\eta)-\Prob(\sigma'=\zeta_T))\geq\e/5,
    \end{aligned}
    \end{equation*}
    by \textbf{Claim 1} and \textbf{Claim 2}, and the fact that $\Prob(\sigma'=\zeta_T)$ is small due to Proposition~\ref{prop:sup_of_r_2}.
\end{proof}

\section{Tightness}
\label{sec:tight}
This section is devoted to the proof of the tightness of the processes $|X_t^T|$, $T\geq1$. 
\begin{proposition}
\label{prop:tight}
    The family of measures on $\mathbf{C}([0,+\infty))$ induced by the processes $|X_t^T|$, $T \geq 1$, with initial distribution $\mu$ of $(r_1,r_2,\theta_1,\theta_2)$, is tight. 
\end{proposition}
\begin{proof}
We verify the conditions of Theorem~7.3 (see also the Corollary after Theorem 7.4) in \cite{billingsley1968convergence}:
\begin{enumerate}[(i)]
    \item For each $\e>0$ and $t>0$, there exist $M>0$ and $T_0>0$ such that
    \begin{equation*}
        \Prob(|X_0^T|>M)<\e,~\text{for all }T>T_0;
    \end{equation*}
    \item For each $\e>0$, $t>0$, and $\kappa>0$, there exist $0<\delta<1$ and $T_0>0$ such that
    \begin{equation*}
        \frac{1}{\delta}\Prob(\sup_{s\leq u\leq s+\delta}\left||X_u^T|-|X_s^T|\right|>\kappa)<\e,~\text{for all }0\leq s\leq t-\delta,~T>T_0.
    \end{equation*}
\end{enumerate}

Part (i) is trivial since we have a fixed initial distribution of $r_1$.
To prove part (ii), we fix $\e>0$, $t>0$, and $\kappa>0$, and notice that for {each} $\delta$ and each $0\leq s\leq t-\delta$, we have
\begin{equation}
\label{Strange}
    \Prob(\sup_{s\leq u\leq s+\delta}\left||X_u^T|-|X_s^T|\right|>\kappa)<\Prob(\tau<s+\delta,\sigma<s+\delta),
\end{equation}where $\tau=\inf\{u\geq s:|X_u^T|\geq2\kappa/3\}$ and $\sigma=\inf\{u\geq \tau:|X_u^T-X_\tau^T|\geq\kappa/3\}$. {(Here we introduce the stopping times to start the process $X^T$ away from the origin in order to apply our previous estimates. The definition of the stopping time $\tau$ also implies the last inequality later in the proof.)}
Moreover, by \eqref{eq:the_system} and Proposition~\ref{prop:sup_of_r_2}, we see that $\Prob(\sup_{0\leq s\leq t\cdot T}|r_1(s)|>T^{2})+\Prob(\sup_{0\leq s\leq t\cdot T}|r_2(s)|>\log T)<1/T$ for all $T$ large enough. 
Thus, by the strong Markov property, it suffices to prove that for any given $2\kappa/3\leq |X_0^\e|\leq T^{3/2}$ (or equivalently $2\kappa\sqrt{T}/3\leq|r_1(0)|\leq T^2$) and $|r_2(0)|\leq\log T$,
\begin{equation}
    \Prob(\tilde\sigma<\delta T)<\e\delta/2,
\end{equation}
where $\tilde\sigma=\inf\{u:|r_1(u)-r_1(0)|\geq\kappa\sqrt{T}/3\}$. Recall the definition of the process $Z(\cdot)$ in \eqref{def:Z}. Then 
\begin{equation}
\label{eq:tightness}
    \Prob(\tilde\sigma<\delta T)<\Prob(\{\sup_{0\leq s\leq\delta T}|W_1(s)|>\kappa\sqrt{T}/6\}\cup\{\sup_{0\leq s\leq\delta T}|Z(s)|>\kappa\sqrt{T}/6\}),
\end{equation}
Let us define $\hat\sigma=\inf\{s:|W_1(s)|\geq\kappa\sqrt{T}/6\}\wedge\inf\{s:|Z(s)|\geq\kappa\sqrt{T}/6\}\wedge\inf\{s:|r_2(s)|\geq\log T\}\wedge\delta T\leq\tilde\sigma$.
Then $\Prob(\tilde\sigma<\delta T)<\Prob(\hat\sigma<\delta T)$.
Since $\Prob(|W_1(\hat\sigma)|\geq\kappa\sqrt{T}/6)$ and $\Prob(|r_2(\hat\sigma)|\geq\log T)$ are exponentially small as $\delta\downarrow0$ and as $T\to\infty$, respectively, it remains to consider $\Prob(|Z(\hat\sigma)|\geq\kappa\sqrt{T}/6)$.
We define $\eta_0=0$, $\eta_{k+1} = \eta_k+ t(r_1(\eta_k))$, where the function $t(x)$ can be chosen as $x^{1/6}$, and $\hat k=\inf\{k:\eta_k\geq\hat\sigma\}$. Then, by Proposition~\ref{prop:expansion-t(R)} and noting that $t(\eta_{\hat k-1})<2\sqrt[3]{T}$, we have for all $T$ sufficiently large,
\begin{equation*}
    \|Z(\hat\sigma)\|_2\leq \|Z(\eta_{\hat k}))\|_2+2\sqrt[3]{T}\leq\frac{2\delta T}{\sqrt[6]{|r_1(0)|-\kappa\sqrt{T}/3}}\cdot \frac{\sqrt[6]{|r_1(0)|+\kappa\sqrt{T}/3}}{|r_1(0)|-\kappa\sqrt{T}/3}+2\sqrt[3]{T}\lesssim\frac{\delta}{\kappa}\sqrt{T}.
\end{equation*}Hence, by Chebyshev's inequality, $\Prob(|Z(\hat\sigma)|\geq\kappa\sqrt{T}/6)\lesssim\delta^2/\kappa^4<\e\delta/4$ if we choose $\delta$ sufficiently small and then $T$ sufficiently large.
\end{proof}
\section{Proof of the main result}
\label{sec:weak_convergence}
{The main idea of the proof is to show that $|X_t^T|$ asymptotically solves the martingale problem for the generator of a Brownian motion reflected at the origin. The times where $X_t^T$ is far from the origin are
handled by Proposition \ref{prop:outer_martingale_generalized},  while the times where $X_t^T$ is small are controlled using the results of Section \ref{sec:exit_time_from_the_origin},
namely, Proposition \ref{prop:time_near_zero} and Proposition \ref{prop:number_excursions}.}
Let us again fix $\alpha_2=5/9$.
{The key step in the proof of Theorem \ref{thm:main_result} is the following estimate.}
\begin{lemma}
\label{lem:martingale_problem}
    For each $t>0$ and bounded $f\in\mathbf{C}^3([0,+\infty))$ with bounded derivatives up to order three, such that $f'_+(0) = 0$, 
    \begin{equation}
        \E [f(|X_{t}^T|)-f(|X_0^T|)-\frac{1}{2}\int_0^{t} f''(|X_s^T|)ds]\to0,~~ \text{as}~ T\to\infty,
    \end{equation}
    uniformly in $X_0^T$ on a compact set and  $|r_2(0)|\leq\frac{1}{2}\log T^{\alpha_2}$.
\end{lemma}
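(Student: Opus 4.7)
The plan is to establish the martingale problem identity by decomposing the trajectory of $|X^T|$ into excursions between the levels $\e$ and $2\e$: the pieces on which $|X^T|\geq\e$ are controlled via Lemma~\ref{lem:outer_martingale_generalized}, while the pieces on which $|X^T|\leq 2\e$ are absorbed using the boundary condition $f'_+(0)=0$ in combination with Lemma~\ref{lem:num_excursions_generalized} and Proposition~\ref{prop:time_near_zero}. Given $\gamma>0$, the two parameters $\e$ and $T$ will be chosen in order: first $\e$ small (to defeat the near-origin contribution of order $\e$), then $T\to\infty$ with $\e$ fixed (to defeat the far-from-origin contribution).

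Set $\mu_0=0$ and iteratively define
\[
\mu_{2k+1}=\inf\{s\geq\mu_{2k}:|X^T_s|\leq\e\}\wedge t,\qquad \mu_{2k+2}=\inf\{s\geq\mu_{2k+1}:|X^T_s|\geq 2\e\}\wedge t,
\]
so that $|X^T|\geq\e$ on each far-from-origin interval $[\mu_{2k},\mu_{2k+1}]$ and $|X^T|\leq 2\e$ on each near-origin excursion $[\mu_{2k+1},\mu_{2k+2}]$. Let $M^T_s:=f(|X^T_s|)-f(|X^T_0|)-\tfrac{1}{2}\int_0^s f''(|X^T_u|)du$, which is deterministically bounded by $2\|f\|_\infty+\tfrac{t}{2}\|f''\|_\infty$. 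Then
\[
\E M^T_t=\sum_{k\geq 0}\E\bigl(M^T_{\mu_{2k+1}}-M^T_{\mu_{2k}}\bigr)+\sum_{k\geq 0}\E\bigl(M^T_{\mu_{2k+2}}-M^T_{\mu_{2k+1}}\bigr).
\]
Proposition~\ref{prop:sup_of_r_2} applied with $D(T)=\log T$ keeps $|r_2(sT)|\leq 2\log T$ for $s\leq t$ with probability tending to $1$, so events where this fails contribute $o(1)$ uniformly.

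For the near-origin sum, each completed excursion starts at $|X^T|=\e$ and ends at $|X^T|=2\e$, and since $f'_+(0)=0$ implies $|f'(u)|\leq\|f''\|_\infty u$ on $[0,2\e]$,
\[
|f(2\e)-f(\e)|\leq\tfrac{3}{2}\|f''\|_\infty\e^2.
\]
Lemma~\ref{lem:num_excursions_generalized} gives $\E e^{-\eta_j}\leq 1-\kappa\e$ for the length $\eta_j$ of each such excursion, so applying the strong Markov property inductively and a Chernoff bound yields $\Prob(N\geq n)\leq e^t(1-\kappa\e)^n$, whence $\E N=O(1/\e)$ for the total count of excursions up to time $t$. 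The cumulative $f$-jump contribution is therefore $O(\e)$. For the integral term, Proposition~\ref{prop:time_near_zero} (applied at each downcrossing of $2\e$, with $\e$ replaced by $2\e$) bounds the expected length of each near-origin excursion by $O(\e^2)$, so the expected total time in $[0,2\e]$ is $\E N\cdot O(\e^2)=O(\e)$ and the integral contribution is also $O(\e)$.

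For the far-from-origin sum, conditioning on $\tilde{\mathcal F}^T_{\mu_{2k}}$ and applying Lemma~\ref{lem:outer_martingale_generalized} with starting state $|X^T_{\mu_{2k}}|=2\e$ and the bounded stopping time $\mu_{2k+1}-\mu_{2k}\leq t$ yields
\[
\bigl|\E(M^T_{\mu_{2k+1}}-M^T_{\mu_{2k}}\mid\tilde{\mathcal F}^T_{\mu_{2k}})\bigr|\leq\delta_T(\e),
\]
where $\delta_T(\e)\to 0$ as $T\to\infty$ for each fixed $\e$, uniformly in the relevant starting data. Summing over $k$ produces an overall bound $(\E N+1)\,\delta_T(\e)=O(\delta_T(\e)/\e)$. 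The main obstacle is precisely this interlocking of $\e$ and $T$: the far-from-origin estimate blows up as $\e\to 0$, so one cannot take the two limits simultaneously. The argument is saved by choosing $\e$ small first (so the $O(\e)$ near-origin contributions lie below $\gamma/2$) and only afterwards sending $T\to\infty$ with $\e$ fixed (so that $\delta_T(\e)/\e<\gamma/2$). Minor care is required for the initial interval $[0,\mu_1]$, whose starting value $|X^T_0|$ ranges over a given compact set rather than being exactly $2\e$, and for the possibly truncated terminal interval; both are handled by the same reasoning applied to the first and last summands.
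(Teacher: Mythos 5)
Your decomposition into far-from-origin and near-origin excursions, the count bound $\E N=O(1/\e)$ from Lemma~\ref{lem:num_excursions_generalized} and the strong Markov property, the near-origin control via $f'_+(0)=0$ and Proposition~\ref{prop:time_near_zero}, the far-from-origin control via Lemma~\ref{lem:outer_martingale_generalized}, and the order-of-limits ($\e$ first, then $T$) are all exactly the paper's strategy; your $\mu_k$ sequence is the paper's $\eta_k,\sigma_k$ sequence with relabeling.

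There is, however, a concrete gap in how you control $r_2$. You invoke Proposition~\ref{prop:sup_of_r_2} with $D(T)=\log T$ to conclude $|r_2(sT)|\leq 2\log T$ for $s\leq t$ with high probability, and then apply Lemma~\ref{lem:num_excursions_generalized} and Proposition~\ref{prop:time_near_zero} at the stopping times $\mu_{2k},\mu_{2k+1}$. But those two results require sharper bounds on $r_2$ at the starting time: Lemma~\ref{lem:num_excursions_generalized} assumes $|r_2(0)|\leq\log T$, and Proposition~\ref{prop:time_near_zero} assumes $|r_2(0)|\leq(\log T)^{\alpha_2}=(\log T)^{5/9}$. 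Since at $\mu_{2k}$ or $\mu_{2k+1}$ your argument only guarantees $|r_2|\leq 2\log T$, neither hypothesis is verified, and the near-origin estimates as you have written them do not apply. Moreover, Proposition~\ref{prop:time_near_zero} only bounds $\E(\tau(\e\sqrt{T})\wedge\zeta)$, where $\zeta$ is the hitting time of level $(\log T)^{\alpha_2}$ by $|r_2|$, so you must keep track of the truncation at $\zeta$; a loose "bad event is $o(1)$" does not immediately absorb this because the near-origin time is multiplied back by factors of $T$ when unscaled.

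The fix is the one the paper makes explicit: replace the whole quantity by its version stopped at $\zeta_T=\inf\{s:|r_2(sT)|=(\log T)^{\alpha_2}\}$ (the difference is $o(1)$ since $\Prob(\zeta_T<t)\to 0$, using $D(T)=\tfrac12(\log T)^{\alpha_2}$, which still satisfies the threshold $D(T)\geq 2\sqrt{\log T}$ because $\alpha_2>1/2$). Working before $\zeta_T$ guarantees $|r_2|\leq(\log T)^{\alpha_2}\leq\log T\leq 2\log T$ at every stopping time, so all three cited results apply with their hypotheses verified, and the truncation at $\zeta$ in Proposition~\ref{prop:time_near_zero} is automatically handled. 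With this adjustment your argument matches the paper's proof.
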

\begin{proof}
    Fix $\delta>0$. Define $\zeta_T=\inf\{s:|r_2(s\cdot T)|=\log T^{\alpha_2}\}$, and a sequence of stopping times: $\eta_0\leq\sigma_1\leq\eta_1\leq\cdots$, by $\eta_0 = 0$, $\sigma_k=\inf\{s\geq\eta_{k-1}:|X_s^T|=2\e\}$, $\eta_k=\inf\{s\geq\sigma_k:|X_s^T|=\e\}$, $k\geq1$, where $\e$ is to be specified later.
    Since $f$ is bounded together with its derivatives and $\Prob(\zeta_T<t)\to0$ as $T\to\infty$ by Proposition~\ref{prop:sup_of_r_2}, it is enough to prove 
    \begin{equation}
    \label{eq:proof_main}
        \E [f(|X_{t\wedge\zeta_T}^T|)-f(|X_0^T|)-\frac{1}{2}\int_0^{t\wedge\zeta_T} f''(|X_s^T|)ds]\to0.
    \end{equation}
    Note that
    \begin{align}
        &\E [f(|X_{t\wedge\zeta_T}^T|)-f(|X_0^T|)-\frac{1}{2}\int_0^{t\wedge\zeta_T} f''(|X_s^T|)ds]\nonumber\\
        &=\E\left[f(|X_{\sigma_{1}\wedge t\wedge{\zeta_T}}^T|)-f(|X_0^T|)-\frac{1}{2}\int_{0}^{\sigma_1\wedge t\wedge{\zeta_T}} f''(|X_s^T|)ds\right]\label{term:main_three}\\
        &\quad+\E\sum_{k=1}^{\infty}\chi_{\{\eta_k<t\wedge{\zeta_T}\}}\left[f(|X_{\sigma_{k+1}\wedge t\wedge{\zeta_T}}^T|)-f(|X_{\eta_{k}}^T|)-\frac{1}{2}\int_{\eta_{k}}^{\sigma_{k+1}\wedge t\wedge{\zeta_T}} f''(|X_s^T|)ds\right]\label{term:main_one}\\
        &\quad+\E\sum_{k=1}^{\infty}\chi_{\{\sigma_k<t\wedge{\zeta_T}\}}\left[f(|X_{\eta_{k}\wedge t\wedge{\zeta_T}}^T|)-f(|X_{\sigma_{k}}^T|)-\frac{1}{2}\int_{\sigma_{k}}^{\eta_{k}\wedge t\wedge{\zeta_T}} f''(|X_s^T|)ds\right].\label{term:main_two}
    \end{align}
    
    If $\e$ is small enough and $|X_0^T|<2\e$, then $|f(|X_{\sigma_{1}\wedge t\wedge{\zeta_T}}^T|)-f(|X_0^T|)|$ is small enough due to the boundedness of $f'$, and the integral in \eqref{term:main_three} is also small due to the boundedness of $f''$ and Proposition~\ref{prop:time_near_zero} when $T$ is sufficiently large. 
    On the other hand, if $|X_0^T|\geq2\e$ for a given $\e$, then, by Proposition~\ref{prop:outer_martingale_generalized}, we can choose $T$ large enough such that \eqref{term:main_three} is small. 
    Thus, \eqref{term:main_three} can be bounded by $\delta/3$ for each small $\e$ and all large $T$ depending on $\e$.
    
    By Proposition~\ref{prop:number_excursions}, we have that for each $k\geq 0$, by the strong Markov property,
    \begin{equation}
    \label{eq6:n_excursions}
    \begin{aligned}
        \Prob(\eta_k<t\wedge{\zeta_T})\leq e^t\E(\chi_{\{\eta_k<{\zeta_T}\}}e^{-\eta_k})&\leq e^t(1-\kappa\e)^k,\\
        \Prob(\sigma_k<t\wedge{\zeta_T})\leq \Prob(\eta_{k-1}<t\wedge{\zeta_T})&\leq e^t(1-\kappa\e)^{k-1}.
    \end{aligned}
    \end{equation}
    It follows that
    \begin{align}
        \sum_{k=0}^\infty\Prob(\eta_k<t\wedge{\zeta_T})&\leq\frac{e^t}{\kappa\e},\label{term:num_excursions1}\\
        \sum_{k=1}^\infty\Prob(\sigma_k<t\wedge{\zeta_T})&\leq\frac{e^t}{\kappa\e}.\label{term:num_excursions}
    \end{align}
    Then, we can bound the absolute value of \eqref{term:main_one}, using the strong Markov property, by
    \begin{align}
        \sum_{k=1}^{\infty}\Prob(\eta_k<t\wedge{\zeta_T})\cdot \mathcal S,\label{term6:first}
    \end{align}
    where\[\mathcal S=\sup_{\substack{|r_1(0)|=\e\sqrt{T}\\|r_2(0)|\leq\log T^{\alpha_2}}}\E\left|[f(|X_{\sigma_{1}\wedge t\wedge{\zeta_T}}^T|)-f(|X_0^T|)-\frac{1}{2}\int_{0}^{\sigma_{1}\wedge t\wedge{\zeta_T}} f''(|X_s^T|)ds]\right|.\]
    We can choose $\e$ small enough and then $T$ large enough such that $\mathcal S/\e$ is small since $f'_+(0)=0$, $||X_{\sigma_{1}\wedge t\wedge{\zeta_T}}^T|-\e|\leq\e$, and $\E\sigma_1\wedge t\wedge\zeta_T=O(\e^2)$, as $T\to\infty$, by Proposition~\ref{prop:time_near_zero}.
    Therefore, \eqref{term6:first} can be made smaller than $\delta/3$ for small $\e$ and all large $T$, by \eqref{term:num_excursions1}.
    
    {We can estimate \eqref{term:main_two} similarly. Namely, for each $\e>0$,}
    \begin{align*}
        &\left|\E\sum_{k=1}^{\infty}\chi_{\{\sigma_k<t\wedge{\zeta_T}\}}[f(|X_{\eta_{k}\wedge t\wedge{\zeta_T}}^T|)-f(|X_{\sigma_{k}}^T|)-\frac{1}{2}\int_{\sigma_{k}}^{\eta_{k}\wedge t\wedge{\zeta_T}} f''(|X_s^T|)ds]\right|\\
        &\leq \sum_{k=1}^{\infty}\left|\E\lp\chi_{\{\sigma_k<t\wedge{\zeta_T}\}}[f(|X_{\eta_{k}\wedge t\wedge{\zeta_T}}^T|)-f(|X_{\sigma_{k}}^T|)-\frac{1}{2}\int_{\sigma_{k}}^{\eta_{k}\wedge t\wedge\zeta_T} f''(|X_s^T|)ds]\rp\right|\leq \delta/3,
    \end{align*}for all $T$ sufficiently large, by the strong Markov property, \eqref{term:num_excursions}, and Proposition~\ref{prop:outer_martingale_generalized}.
    The result follows by taking $\e$ sufficiently small, and then $T$ sufficiently large depending on $\e$.
\end{proof}

\begin{proof}[Proof of Theorem~\ref{thm:main_result}]
    By the tightness of the family of processes $|X_t^T|$ established in Proposition~\ref{prop:tight}, for every sequence $\hat T_n\to\infty$, there is a subsequence $T_n\to\infty$ such that $X_t^{T_{n}}$ converges weakly to a continuous process $Y_t$. 
    The desired weak convergence follows if we prove that, no matter which sequence $\{\hat T_n\}$ and $\{T_n\}$ we choose, $Y_t$ always coincides in distribution with the Brownian motion starting and reflected at the origin. 
    Let $S=[0,+\infty)$, $C_0(S)$ be the set of continuous functions that converge to $0$ at $+\infty$, $A=\frac{1}{2}\Delta$ with the domain $\mathcal D(A)=\{f\in C_0(S):f'_+(0)=0, f''\in C_0(S)\}$, and $\mathcal D$ is the set of all functions in $\mathcal D(A)$ that have bounded derivatives up to order three. 
    We will show that $Y_t$ is a solution to the martingale problem for $(A|_{\mathcal D},0)$, i.e., for each $f\in\mathcal D$ and $0\leq t_1\leq t_2$, with $\mathcal F_{t}^{Y_\cdot}$ being the filtration generated by $Y_t$,
    \begin{equation}
    \label{eq:mg_problem}
        \E[f(Y_{t_2})-f(Y_{t_1})-\frac{1}{2}\int_{t_1}^{t_2}f''(Y_t)dt\big|\mathcal F_{t_1}^{Y_\cdot}]=0,\quad Y_0=0.
    \end{equation}
    
    {It is easy to see that 
  the pair $(C_0(S), \mathcal D)$ satisfies the following conditions:}
    \begin{enumerate}[(i)]
        \item $\mathcal D$ is dense in $C_0(S)$;
        \item There exists $\lambda>0$ such that $\mathrm{Range}(\lambda-A|_{\mathcal D})$ is dense in $C_0(S)$;
        \item For each pair of measures $\nu_1,\nu_2$ on $S$, the equality $\int_S fd\nu_1=\int_S fd\nu_2$ for all $f\in C_0(S)$ implies $\nu_1=\nu_2$.
    \end{enumerate}
    {Therefore, Theorem 4.1 in Chapter 4 of \cite{ethier_kurtz} implies that the solution to the martingale problem \eqref{eq:mg_problem} is unique. }
    
    {It remains to prove every limiting process satisfies \eqref{eq:mg_problem}.} It is easy to see that $|X_0^T|\to0$ as $T\to\infty$. Therefore, it is sufficient to prove that, for each $f\in\mathcal D$, $0\leq s_1<...<s_k\leq t_1$, and $g_1,...,g_k\in C_0(S)$,
    \begin{equation*}
        \E\left[\prod_{i=1}^k g_i(Y_{s_i})(f(Y_{t_2})-f(Y_{t_1})-\frac{1}{2}\int_{t_1}^{t_2}f''(Y_t)dt)\right]=0.
    \end{equation*}
    Since $X_t^{T_n}$ converges to $Y_t$ weakly, it is enough to prove
    \begin{equation*}
        \E\left[\prod_{i=1}^k g_i(|X_{s_i}^{T_n}|)(f(|X_{t_2}^{T_n}|)-f(|X_{t_1}^{T_n}|)-\frac{1}{2}\int_{t_1}^{t_2}f''(|X_{t}^{T_n}|)dt)\right]\to0,
    \end{equation*}
    which follows from Proposition~\ref{prop:sup_of_r_2}, Proposition~\ref{prop:tight}, Lemma~\ref{lem:martingale_problem}, and the strong Markov property.
\end{proof}

\section*{Acknowledgements}
We are grateful to Jacob Bedrossian for introducing us to this problem.
Dmitry Dolgopyat was supported by the NSF grants DMS-1956049 and DMS-2246983.
Bassam Fayad was supported by the NSF grant DMS-2101464.
Leonid Koralov was supported by the NSF grant DMS-2307377 and by the Simons Foundation Grant MP-TSM-00002743.
Shuo Yan was supported by the NSF grant DMS-1956049.
\printbibliography
\end{document}